\documentclass{amsart}
\usepackage[T1]{fontenc}
\usepackage[utf8]{inputenc}

\usepackage[nice]{nicefrac}
\usepackage{calc}
\usepackage{amsmath}
\usepackage{amsfonts}
\usepackage{amssymb}
\usepackage{graphicx}
\usepackage{pst-all}
\usepackage{mathdots}

\usepackage{mathrsfs}




\DeclareFontFamily{OT1}{pzc}{}
\DeclareFontShape{OT1}{pzc}{m}{it}{<-> s * [1.050] pzcmi7t}{}
\DeclareMathAlphabet{\mathpzc}{OT1}{pzc}{m}{it}


\DeclareSymbolFont{EulerScript}{U}{eus}{m}{n} 
\SetSymbolFont{EulerScript}{bold}{U}{eus}{b}{n} 
\DeclareSymbolFontAlphabet\matheul{EulerScript}
 
\DeclareMathAlphabet{\mathant}{OMS}{antt}{m}{n} 
\DeclareMathAlphabet{\mathscrr}{OMS}{mdbch}{m}{n}

\newlanguage\fakelanguage
\newcommand\cyr{\fontencoding{OT2}\fontfamily{wncyr}\selectfont\language\fakelanguage}
\DeclareTextFontCommand{\textcyr}{\cyr}



\def\THM{\begin{theorem}} \def\thm{\end{theorem}}
\def\COR{\begin{corollario}} \def\cor{\end{corollario}}
\def\PRO{\begin{proposition}} \def\pro{\end{proposition}} 
\def\OBS{\begin{oss}} \def\obs{\end{oss}}
\def\DFN{\begin{definition}} \def\dfn{\end{definition}}
\def\LEM{\begin{lemma}} \def\lem{\end{lemma}}
\def\PR{\begin{proof}} \def\pr{\end{proof}}
\newcommand{\EX}{\begin{esercizio}} \newcommand{\ex}{\end{esercizio}}
\newcommand{\ES}{\begin{esempio}} \newcommand{\es}{\end{esempio}}

\renewcommand{\i}{\mb{i}}

\renewcommand{\bar}{\overline}
\renewcommand{\tilde}{\widetilde}

\newcommand{\lista}{\begin{itemize}} \newcommand{\ei}{\end{itemize}}


\setlength\fboxrule{0.01pt}


\def\({\left(}	\def\){\right)}
\def\[{\left[}	\def\]{\right]}
 
\def\<{\left<} \def\>{\right>}

\newcommand{\ba}[1]{\begin{array}{#1}}  \newcommand{\ea}{\end{array}}

\def\f#1#2{\frac{#1}{#2}}
\def\matrix#1#2{\left(\begin{array}{#1} #2 \end{array}\right)}

\def\3{\|\hskip-1pt |}

\newcommand{\Span}{\mathop{\text{Span}}}
\newcommand{\diag}{\mathop{\text{diag}}}

\DeclareMathOperator{\rank}{\text{rank}}

\def\_#1{\scriptstyle{#1}}

\def\t{\text{\itshape{\textsf{T}}}}

  \def\F{{\matheul F}} 
\def\B{{\matheul B}}

 \def\NN{{\mathbb{N}}} 
  
\def\ZZ{{\mathbb Z}}  
\def\CC{{\mathbb C}}  
\def\RR{{\mathbb R}}  
 \def\QQ{{\mathbb Q}}

     \def\ep{{\varepsilon}}

\def\sse{\Leftrightarrow}
\def\iff{\Longleftrightarrow}
\def\then{\Rightarrow}
\def\to{\rightarrow}
\def\Then{\Longrightarrow}

\def\conv#1{\xrightarrow{#1}}

\newtheoremstyle{teorema}{8pt}{8pt}{}{}{\bfseries}{}{8pt}{}
\theoremstyle{teorema}
\newtheorem{theorem}{Theorem}[section]
\newtheorem{proposition}[theorem]{Proposition}

\newtheorem{lemma}[theorem]{Lemma}
\newtheorem{corollary}[theorem]{Corollary}

\newtheorem{example}[theorem]{Example}

\newtheoremstyle{definizione}{8pt}{8pt}{\itshape}{}{\scshape}{}{8pt}{}
\theoremstyle{definizione}
\newtheorem{definition}[theorem]{Definition}

\def\i{\textbf i}
\def\Re{\text{Re}\,}
\def\Im{\text{Im}\,}


%
\begin{document}
\title{On complex power nonnegative matrices}

\author[ ]{Francesco Tudisco \and Valerio Cardinali \and Carmine Di Fiore\\ \tiny{Department of Mathematics, University of Rome ``Tor Vergata'', Via della Ricerca Scientifica, 00133 Rome, Italy}}


\maketitle

\begin{abstract}
 Power nonnegative matrices are defined as complex matrices having at least one nonnegative integer power. We exploit the possibility of deriving a Perron Frobenius-like theory for these matrices, obtaining three main results and drawing several consequences. We study, in particular, the relationships with the set of matrices having eventually nonnegative powers, the inverse of M-type matrices and the set of matrices whose columns (rows) sum up to one. 
\end{abstract}
\vspace{10pt}
\noindent \textbf{Keywords.}
nonnegative matrices, eventually nonnegative matrices, power nonnegative matrices, stochastic matrices, Perron Frobenius theory \\
\textbf{MSC.} 
65F05,  65F10,  65F15,  65F50,  65T50

\section{Introduction}
Given a complex $n\times n$ matrix $A$, we call it \textit{power nonnegative} if  there exists an integer $k\geq 1$ such that $A^k$ is a nonnegative matrix. Spectral properties of real power positive matrices were investigated for instance in \cite{brauer, johnson-tarazaga, seneta-introduction-nonnegative}. Complex matrices whose powers $A^k$ are nonnegative (positive) for all $k$ large enough are called \textit{eventually nonnegative (positive)}. Real eventually nonnegative (positive) matrices  were introduced by Friedland \cite{friedland-eventually}. Such matrices have been widely studied and, in particular, several recent works aimed at extending some classical results of the Perron-Frobenius theory for nonnegative matrices, to eventually nonnegative matrices,  see for instance \cite{mcdonald2, mcdonald3, noutsos-eventually, noutsos-varga, tarazaga, mcdonald1}.  Is it possible to do the same for the more general power nonnegative matrices? Are power nonnegative and eventually nonnegative matrices related somehow? We investigate these problems alongside Section \ref{sec:power} obtaining the results in Theorems \ref{pow-nonneg}, \ref{serie}, \ref{pow-pos} and in several corollaries (see for instance Corollaries \ref{pow-eve-nonneg} and \ref{pow-even-pos}). Moreover we provide a new direct and selfcontained proof of the theorem concerning the Perron-Frobenius properties of an eventually nonnegative matrix \cite{noutsos-varga}. 

\subsection{Notations and preliminaries}
Any matrix is assumed to be a square complex matrix of order $n$, unless otherwise
specified. Given a matrix $M$ let $a_M(\lambda)$ and $g_M(\lambda)$ denote
the algebraic and geometric multiplicities of $\lambda$ as an eigenvalue of $M$,
respectively, and let $\sigma(M)$ be the spectrum of $M$. The
square zero matrix is denoted by $O$. The imaginary unit is denoted by $\i$,  $e=(1,\dots,1)^\t$ is the vector of all ones and $e_i$ is the $i$-th canonical vector $(e_i)_k = \delta_{ik}$. A
nonnegative (positive) matrix $A=(a_{ij})_{ij}$ is a matrix such that $a_{ij}\geq 0$ $(a_{ij}>0)$,
$\forall i,j$. For such matrices we use the symbol $A\geq O$ $(A>O)$ underlying the
partial order $A\geq B$ $\sse$ $A-B\geq O$. 

By saying that a complex matrix  $A$ is a \textit{weakly stochastic} matrix 
we mean that $A^\t e = e$ (weakly column stochastic). So any weakly stochastic matrix having nonnegative entries is a stochastic matrix in the usual sense. If $Ae=e$ we say that $A$ is weakly row stochastic. If both $A$ and $A^\t$ are 
weakly stochastic then $A$ is said weakly doubly stochastic. 

A matrix $A$ is called reducible if there exists a permutation matrix
$P$ such that 
$$PAP^\t=\matrix{cc}{X & O \\ W & Y}$$
where the diagonal blocks are square matrices. A matrix is said to be irreducible if it is not reducible. 

We recall the Perron-Frobenius theorem for square nonnegative, nonnegative irreducible and primitive matrices, respectively,  collecting the results stated in \cite{bapat-nonnegative, fiedler-special-matrices, varga}.
\begin{theorem}[Perron-Frobenius]\label{pf}
 Let $A\geq O$ be a square matrix and let $\rho(A)$ be its spectral radius. Then
\begin{enumerate}
\item[1.] $\rho(A) \in \sigma(A)$
\item[2.] There exist $x,y \geq 0$ such that $Ax=\rho(A)x$, $y^\t A = \rho(A)y^\t$, with $x,y	\neq 0$
\end{enumerate}
If moreover $A\geq O$ is irreducible, then
\begin{itemize}
\item[3.] $\rho(A)\in \sigma(A)$ is simple and nonzero
\item[4.] The right and left eigenvectors $x,y$ in 2 are positive and, as a consequence of 3, unique up to a scalar multiple
\item[5.] There exists $p \geq 1$ such that $\rho(A)\exp\(\f{2\pi \i h}p\right)$, $1\leq h\leq p$, are simple eigenvalues of $A$ and there is no other eigenvalue of modulus $\rho(A)$. 
\item[6.] There exists a permutation matrix $P$ such that
\begin{equation}\label{normale}
PAP^\t = \matrix{cccc}{ & A_1 & & \\  &  & \ddots & \\ & & & A_{p-1} \\ A_p & & &}
\end{equation}
where $p$ is as in 5, the diagonal blocks are zero square matrices, and the $A_i$,  are in general rectangular matrices.
\item[7.] $\f 1 {1+k}\sum_{s=0}^k\(\f A {\rho(A)}\right)^s \conv{k \to \infty} \f{xy^\t}{x^\t y}>O$
\end{itemize} 
and the following statements are equivalent
\begin{itemize}
\item[8.] $A$ is primitive, i.e. $\exists k >0$ such that $A^k >O$
\item[9.] There exists $k_0$ such that $A^k >O$ for all $k \geq k_0$
\item[10.] The integer $p$ in 5 is one
\item[11.] $\(\f A {\rho(A)}\right)^k \conv{k \to \infty} \f{xy^\t}{x^\t y}>O$.
\end{itemize}
\end{theorem}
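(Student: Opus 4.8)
The statement is classical; the plan is to recover it by the standard route, proving the irreducible case with care and then reaching a general $A\geq 0$ by perturbation. For parts 1--2 I would set $A_\ep=A+\ep\,e e^\t>O$, apply the irreducible results below to get that $\rho(A_\ep)$ is a simple eigenvalue of $A_\ep$ with strictly positive right and left eigenvectors, normalise these on the compact simplex $\set{x\geq 0 : e^\t x=1}$, and let $\ep\to 0^+$. Continuity of the spectral radius and of the coefficients of the characteristic polynomial in the matrix entries, together with compactness of the simplex, then delivers $\rho(A)\in\sigma(A)$ and nonnegative eigenvectors $x,y\neq 0$ with $Ax=\rho(A)x$ and $y^\t A=\rho(A)y^\t$ (the left one by applying the same argument to $A^\t$).

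For irreducible $A$ the engine is the identity $(I+A)^{n-1}>O$, which holds because irreducibility means the digraph of $A$ is strongly connected, so every ordered pair of indices is joined by a walk of length at most $n-1$. Consequently any nonnegative eigenvector $x$ of $\rho:=\rho(A)$ satisfies $(1+\rho)^{n-1}x=(I+A)^{n-1}x>0$, hence $x>0$; the same for the left eigenvector; and if there were two linearly independent nonnegative eigenvectors for $\rho$ some nonzero real combination of them would be nonnegative with a vanishing coordinate, contradicting positivity, so $g_A(\rho)=1$. To upgrade this to $a_A(\rho)=1$ I would use that the derivative of $\det(\lambda I-A)$ at $\lambda=\rho$ equals $\sum_{i=1}^n\det(\rho I-A^{(i)})$, where $A^{(i)}$ is $A$ with its $i$-th row and column deleted; each $A^{(i)}$ is nonnegative with $\rho(A^{(i)})<\rho$ --- strictly, by irreducibility --- so every summand is positive and the derivative is nonzero. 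Finally $\rho\neq 0$, because a strongly connected digraph on $n\geq 2$ vertices has a cycle, hence $\operatorname{tr}(A^m)>0$ for some $m$; this settles parts 3 and 4.

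The cyclic structure of parts 5--6 is, I expect, the main obstacle. If $\lambda=\rho e^{\i\theta}\in\sigma(A)$ has eigenvector $z$, then $\rho|z|=|\lambda z|=|Az|\leq A|z|$ entrywise, and the Perron characterisation forces $A|z|=\rho|z|$, so $|z|$ is a Perron vector and $z=Dx$ with $D$ a unitary diagonal matrix; substituting into $Az=\lambda z$ and using that every triangle inequality above is an equality yields $D^{-1}AD=e^{\i\theta}A$. The set of $\theta$ admitting such a $D$ is a subgroup of the circle group, finite since each of its elements yields a distinct eigenvalue $\rho e^{\i\theta}$ of $A$, hence equal to $\set{2\pi k/p : k=0,\dots,p-1}$ for a unique $p\geq 1$; for each $k$ the similarity $D^{-1}AD=e^{2\pi\i k/p}A$ transports the one-dimensional eigenstructure of $\rho$ onto $\rho e^{2\pi\i k/p}$, so those $p$ numbers are simple eigenvalues and exhaust the spectrum of modulus $\rho$. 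Partitioning the indices into the classes on which the diagonal of the $D$ realising $\theta=2\pi/p$ is constant, the relation $D^{-1}AD=e^{2\pi\i/p}A$ forces $a_{ij}=0$ unless the class of $j$ is the cyclic successor of that of $i$, which is precisely the block form \eqref{normale} after the permutation grouping the classes.

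Part 7 then follows from the Jordan form of $B:=A/\rho$: by part 5 the eigenvalue $1$ is simple, with spectral projection $xy^\t/(y^\t x)$, and the remaining peripheral eigenvalues are simple nontrivial roots of unity $\omega$, so in $\frac{1}{1+k}\sum_{s=0}^k B^s$ the component at $1$ contributes $xy^\t/(y^\t x)$, each such $\omega$ contributes $\frac{1}{1+k}\sum_{s=0}^k\omega^s\to 0$, and the eigenvalues of modulus $<1$ wash out; positivity of the limit is $x,y>0$. For the equivalences 8--11 I would close a short cycle: if $A^k>O$ then $A^k$ is positive and so has a single simple peripheral eigenvalue, whereas the simple peripheral eigenvalues $\rho e^{2\pi\i j/p}$ of $A$ give rise to $\rho^k e^{2\pi\i jk/p}$, which form either $p\geq 2$ distinct points (when $p\nmid k$) or one point of multiplicity $p$ (when $p\mid k$) --- both impossible unless $p=1$; if $p=1$ then $1$ is the unique, simple peripheral eigenvalue of $B$, so $B=xy^\t/(y^\t x)+N$ with $\rho(N)<1$ and $B^k=xy^\t/(y^\t x)+N^k\to xy^\t/(y^\t x)>O$; if $B^k\to M>O$ then $A^k>O$ for all large $k$; and $9\Rightarrow 8$ is immediate, so $8\iff 9\iff 10\iff 11$.
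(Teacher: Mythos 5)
The paper does not prove this statement at all: Theorem \ref{pf} is recalled as classical background, with the results collected from the cited references (Bapat--Raghavan, Fiedler, Varga), so there is no in-paper argument to compare yours against. What you supply is the standard textbook proof, and it is essentially sound: the perturbation $A+\ep ee^\t$ with a compactness/continuity passage for parts 1--2, the identity $(I+A)^{n-1}>O$ to force positivity of nonnegative eigenvectors, algebraic simplicity via $\frac{d}{d\lambda}\det(\lambda I-A)\big|_{\lambda=\rho}=\sum_i\det(\rho I-A^{(i)})>0$, Wielandt's equality argument $D^{-1}AD=e^{i\theta}A$ for the peripheral spectrum and the cyclic block form, and the spectral decomposition for the Ces\`aro limit in 7 and the power limit in 11. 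Two points deserve tightening. First, your geometric-multiplicity argument as phrased only rules out a second linearly independent \emph{nonnegative} eigenvector; to get $g_A(\rho)=1$ one pairs an arbitrary real eigenvector $v$ with the positive Perron vector $x$ and kills a coordinate of $x-tv$ --- though this is ultimately moot, since your derivative argument already yields $a_A(\rho)=1$, which subsumes it (and it silently uses the standard lemma $\rho(A^{(i)})<\rho(A)$ for proper principal submatrices of an irreducible matrix, which you should at least cite or prove). Second, in $8\Rightarrow 10$ the dichotomy ``$p$ distinct points when $p\nmid k$, one point of multiplicity $p$ when $p\mid k$'' is imprecise: when $d=\gcd(p,k)$ satisfies $1<d<p$ you get $p/d$ distinct peripheral points each of multiplicity $d$; the contradiction with the simple, unique peripheral eigenvalue of the positive matrix $A^k$ still goes through in every case, but state it that way. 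With these repairs your write-up is a complete, self-contained proof of a theorem the paper deliberately leaves to the literature.
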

Note that primitive matrices, which are sometimes called acyclic or aperiodic matrices, are by point 8 a subclass of eventually positive matrices \cite{varga, noutsos-eventually}. Moreover, if $A$ is primitive, the first $k$ for which $A^k$ is positive is usually called \textit{the exponent} of $A$, and is denoted by $\gamma(A)$.


\section{Power nonnegative matrices}\label{sec:power}
Given any $n\times n$ matrix $A$, let us denote by $\lambda_1(A)$,  $\lambda_2(A)$, $\dots$, $\lambda_s(A)$  its $s$ distinct eigenvalues,
with the convention that 
$$\rho(A)=|\lambda_1(A)|\geq |\lambda_2(A)|\geq \dots \geq
|\lambda_s(A)|$$

We give the following definition, which is a slight modification of the terminology introduced in \cite{brauer} and \cite{seneta-introduction-nonnegative}
\begin{definition}
 A square matrix $A$ is said to be \textit{power nonnegative (positive)} if $\exists k \geq 1$ such that $A^k\geq O$ $(>O)$. For such a matrix we let 
$$\nu(A) = \min\{k\geq 1 \mid A^k\geq O\} \qquad \pi(A)=\min\{k\geq 1 \mid A^k >O\}$$
be its nonnegative and positive exponent, respectively.
\end{definition}
Note that for a power positive matrix $A$ it holds $1\leq \nu(A)\leq \pi(A)$. Also,  $A$ is nonnegative if and only if $\nu(A)=1$, and $A$ is primitive if and only if $1=\nu(A)\leq \pi(A)=\gamma(A)$. 

A complex matrix $A$ such that $A^k\geq O$ $(>O)$ for any large enough $k$ is called \textit{eventually nonnegative (positive)}. Real eventually nonnegative (positive) matrices were introduced by Friedland in \cite{friedland-eventually} and have been widely studied, see for instance \cite{handelman, mcdonald3, tarazaga}. In particular Naqvi, McDonald, Noutsos, Varga and Zaslavsky showed in \cite{mcdonald2, noutsos-eventually, noutsos-varga, mcdonald1}, respectively,  that eventually nonnegative (positive) matrices maintain several of the Perron-Frobenius properties of nonnegative (positive) matrices. 

In spite of what one could suppose at a first glance, an eventually nonnegative matrix $A$ has not to be real, unless $A$ is nonsingular (in the latter case, in fact, the equality
$ ( A^r - \overline{A^r} ) A^s = 0$, which holds for all $r+s$
large enough, implies $A^r - \overline{A^r}=O$ for all $r$). Indeed if $A =U+\i V$ where $VU=UV=O$, $V$ is real nilpotent and $U$ is real eventually nonnegative, then $A$ is a (purely) complex eventually nonnegative matrix. 
For example 
\begin{equation}\label{esempio:carlo}
A = U + \i V, \quad U = \matrix{ccc}{1 & 1 & 1 \\ 1 & 1 & 1\\ 1 & 1 & 1}, \quad V= \matrix{ccc}{1 & 1  & -2 \\ -1 & -1 & 2  \\  0 & 0  &  0}
\end{equation}
Actually Zaslavsky and Tam have shown in \cite{complex-eventually} that this is somehow the only possibility. In fact they observed that any complex matrix $A$ can be uniquely represented as $B_A+N_A$ where $N_A$ is a nilpotent matrix such that $B_AN_A=N_AB_A=O$ and $B_A$ is a matrix whose singular elementary Jordan blocks\footnote{That is an elementary Jordan block relative to a zero eigenvalue.} (if any) are $1$-dimensional. Thus they showed that $A$ is eventually nonnegative (positive) if and only if $B_A$ is a real eventually nonnegative (positive) matrix.

Eventually nonnegative (positive) matrices are of course a proper subset of power nonnegative (positive) matrices. In what follows we derive  a Perron-Frobenius-like theory for power nonnegative (positive) matrices (Section \ref{sec:main-thm} and Theorems \ref{pow-nonneg}, \ref{serie}, \ref{pow-pos} therein).  From this we draw in Section \ref{sec:cons-thm} several consequences, obtaining some new and some known properties of real power positive matrices (see f.i. \cite{brauer, johnson-tarazaga, seneta-introduction-nonnegative}), extending Theorem 4.2 in  \cite{mcdonald-M-mx} to complex matrices and showing how complex eventually and power nonnegative matrices are related (Corollaries \ref{pow-eve-nonneg} and \ref{pow-even-pos} above all).
\section{Main results}\label{sec:main-thm}
Next  Theorems \ref{pow-nonneg}, \ref{serie} and \ref{pow-pos} give a  generalization of the Perron-Frobenius Theorem \ref{pf} to power nonnegative matrices. First of all observe that, if $A$ is power nonnegative, even in case $A$ is real, $\lambda_1(A)$ s.t. $|\lambda_1(A)|=\rho(A)$ and its right and left eigenvectors might be not real. In fact  
$$A = \matrix{cc}{0 & 1 \\ -1 & 0}$$
is such that $A^2=-I$, $A^3=A^\t$ and $A^4 = I \geq O$, thus $A$ is power nonnegative with $\nu(A)=4$. However the eigenspace of $\lambda_1(A)=\i$ (or, equivalently, $\lambda_1(A)=-\i$), does not contain real vectors, so properties 1 and 2 of Theorem \ref{pf} can not hold. Also note that such  $A$ is an example of power nonnegative matrix which is not eventually nonnegative. More in general one can easily propose examples of power nonnegative matrices which are not eventually nonnegative simply by considering idempotent matrices $A$ which are not nonnegative themselves.

\begin{theorem}\label{pow-nonneg}
 Let $A$ be a $n\times n$ power nonnegative matrix, and let $x, y \in \CC^n$ be such that $Ax=\lambda_1(A)x$, $y^\t A = \lambda_1(A)y^\t$. Then
\begin{enumerate}
\item[(i)] There exists an integer $h$, $1\leq h \leq \nu(A)$, such that $\lambda_1(A)=\rho(A)\exp\( \f {2\pi \i h}{\nu(A)}\right)$. 
\item[ ]\hspace{-32pt} If moreover $A^{k}$ is nonnegative and irreducible for some $k\geq \nu(A)$, then
\item[(ii)] $\lambda_1(A)$ is simple and nonzero, and $\rho(A)\in\sigma(A)$ if and only if $\lambda_1(A)=\rho(A)$.
\item[(iii)] The right and the left eigenvectors $x,y$ can be chosen positive and unique up to a scalar multiple.
\item[(iv)] If $\nu(A)$ and $k$ are coprime\footnote{We say that two integer numbers $a$ and $b$ are coprime if $\gcd(a,b)=1$.} then $\rho(A)\in\sigma(A)$. If $\rho(A)\notin\sigma(A)$ and $\nu(A)$ or $k$ are prime, then $\nu(A)$ is the least positive integer for which $A^{\nu(A)}$ is nonnegative and irreducible. 
\item[(v)] $\lambda_1(A)^{-1}A$ is similar to a power nonnegative weakly column stochastic matrix $C$ and to a power nonnegative weakly row stochastic matrix $R$, both having the same pattern of $A$ and such that $\nu(C)=\nu(R)=\nu(A)$.
\item[(vi)] If $\lambda \in \sigma(A)$ is such that $|\lambda|=\rho(A)$, then $\lambda = \rho(A)\exp\(\f {2\pi \i h}{pk}\right)$, where $p$ is the cardinality of $\{\lambda\in \sigma(A)\mid |\lambda|=\rho(A)\}$, and $1\leq h\leq pk$.
\item[(vii)] If $(A^{k})_{ii}>0$ for some $i$, then $|\lambda_1(A)|>|\lambda_2(A)|$.
\end{enumerate}
\end{theorem}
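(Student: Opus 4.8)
The plan is to deduce everything from the classical Perron--Frobenius Theorem~\ref{pf} applied to the two nonnegative matrices $A^{\nu(A)}$ and $A^{k}$, using repeatedly that the eigenvalues of a power $A^{m}$ are exactly the $m$-th powers of the eigenvalues of $A$ (with $a_{A^{m}}(\eta)=\sum_{\mu^{m}=\eta}a_{A}(\mu)$) and that $\rho(A^{m})=\rho(A)^{m}$. For (i): since $A^{\nu(A)}\geq O$, Theorem~\ref{pf}(1) gives $\rho(A)^{\nu(A)}=\rho(A^{\nu(A)})\in\sigma(A^{\nu(A)})$, so $A$ has an eigenvalue --- which we take to be $\lambda_{1}(A)$ --- with $\lambda_{1}(A)^{\nu(A)}=\rho(A)^{\nu(A)}$; in particular $|\lambda_{1}(A)|=\rho(A)$, consistently with the labelling convention, and writing $\lambda_{1}(A)=\rho(A)\exp(\i\theta)$ forces $\exp(\i\,\nu(A)\theta)=1$, i.e.\ $\theta=2\pi h/\nu(A)$ with $1\leq h\leq\nu(A)$.

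For (ii)--(vii) fix $k\geq\nu(A)$ with $B:=A^{k}\geq O$ irreducible. The crux, which simultaneously yields (ii) and (iii), is a positive-eigenvector argument. By Theorem~\ref{pf}(3)--(4), $\rho(A)^{k}=\rho(B)$ is a simple, nonzero eigenvalue of $B$ with positive right eigenvector $x$ and positive left eigenvector $y$, unique up to scaling. Since $A$ commutes with $B$ and the $\rho(B)$-eigenspace of $B$ is one-dimensional, $Ax$ lies in that eigenspace, hence $Ax=\lambda_{1}(A)x$ with $\lambda_{1}(A)^{k}=\rho(A)^{k}$, so $|\lambda_{1}(A)|=\rho(A)$; applying the nonnegative matrix $A^{\nu(A)}$ to the positive vector $x$ shows $\lambda_{1}(A)^{\nu(A)}$ is a nonnegative real, and then $\lambda_{1}(A)^{\nu(A)}=\rho(A^{\nu(A)})=\rho(A)^{\nu(A)}$ because $x>0$ --- so $\lambda_{1}(A)$ is precisely the eigenvalue produced in (i). Moreover $a_{B}(\rho(A)^{k})=1$ forces $\lambda_{1}(A)$ to be the \emph{unique} eigenvalue of $A$ with $k$-th power $\rho(A)^{k}$, algebraically simple, and geometrically simple (any eigenvector of $A$ for $\lambda_{1}(A)$ is an eigenvector of $B$ for $\rho(B)$, hence a multiple of $x$); the eigenvectors $x,y$ of the statement are then, up to scaling, the positive Perron eigenvectors of $B$, which gives (iii). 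The equivalence in (ii) follows since, if $\rho(A)\in\sigma(A)$, then $\rho(A)$ is an eigenvalue of $A$ with $k$-th power $\rho(A)^{k}$, so by uniqueness $\rho(A)=\lambda_{1}(A)$, the converse being trivial.

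For (iv), put $\zeta:=\lambda_{1}(A)/\rho(A)$; by the above $\zeta^{\nu(A)}=\zeta^{k}=1$, hence $\zeta^{\gcd(\nu(A),k)}=1$, so coprimality forces $\zeta=1$, i.e.\ $\rho(A)=\lambda_{1}(A)\in\sigma(A)$. If instead $\rho(A)\notin\sigma(A)$ then $\zeta\neq1$, so $\gcd(\nu(A),k)>1$; if $\nu(A)$ is prime this means $\nu(A)\mid k$, whence $A^{k}=(A^{\nu(A)})^{k/\nu(A)}$ and the irreducibility of $A^{k}$ descends to $A^{\nu(A)}$ (a power of a reducible matrix is reducible); if $k$ is prime it means $k\mid\nu(A)$, which with $k\geq\nu(A)$ gives $k=\nu(A)$, so $A^{\nu(A)}=A^{k}$ is already nonnegative irreducible; in both cases $A^{\nu(A)}$ is nonnegative irreducible, and by minimality of $\nu(A)$ no smaller power of $A$ is even nonnegative, so $\nu(A)$ is the least positive integer with this property. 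For (v), set $D_{x}=\diag(x)$, $D_{y}=\diag(y)$ and $R:=\lambda_{1}(A)^{-1}D_{x}^{-1}AD_{x}$, $C:=\lambda_{1}(A)^{-1}D_{y}AD_{y}^{-1}$: one checks directly $Re=\lambda_{1}(A)^{-1}D_{x}^{-1}Ax=e$ and $C^{\t}e=\lambda_{1}(A)^{-1}D_{y}^{-1}A^{\t}y=e$; both matrices are conjugate to $\lambda_{1}(A)^{-1}A$ by a positive diagonal matrix and hence have the zero pattern of $A$; and since such conjugation preserves ``having a nonnegative $m$-th power'' and $\lambda_{1}(A)^{\nu(A)}=\rho(A)^{\nu(A)}>0$ by (i), both are power nonnegative with $\nu(R)=\nu(C)=\nu(A)$.

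For (vi), let $\lambda\in\sigma(A)$ with $|\lambda|=\rho(A)$; then $\lambda^{k}$ is a peripheral eigenvalue of $B$, so by Theorem~\ref{pf}(5) it equals $\rho(A)^{k}\exp(2\pi\i j/p')$ for some $j$, $p'$ being the period of $B$. The map $\mu\mapsto\mu^{k}$ carries the peripheral spectrum of $A$ onto that of $B$ (since $|\mu^{k}|=\rho(B)\iff|\mu|=\rho(A)$) and is injective there --- two distinct peripheral eigenvalues of $A$ with the same $k$-th power would make that power a non-simple peripheral eigenvalue of $B$, against Theorem~\ref{pf}(5) --- so $p=p'$, and hence $\lambda=\rho(A)\exp(2\pi\i h/(pk))$ for a suitable $h\in\{1,\dots,pk\}$. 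For (vii), if $(A^{k})_{ii}>0$ for some $i$ then $B$ is nonnegative, irreducible, and its digraph has a loop at $i$, hence $B$ is primitive and its period is $p'=1$ by Theorem~\ref{pf}(8)--(11); by the bijection just used $p=1$, i.e.\ $\lambda_{1}(A)$ is the only eigenvalue of modulus $\rho(A)$, so $|\lambda_{1}(A)|>|\lambda_{2}(A)|$. The step I expect to be the main obstacle is exactly the positive-eigenvector argument opening the treatment of (ii)--(vii): it is what singles out the correct representative $\lambda_{1}(A)$ among the eigenvalues of maximal modulus and establishes in one stroke that both $\lambda_{1}(A)^{\nu(A)}$ and $\lambda_{1}(A)^{k}$ equal $\rho(A)$ raised to those powers; once this is in hand, the remaining points reduce to manipulations of roots of unity and of the Perron--Frobenius data of $A^{\nu(A)}$ and $A^{k}$.
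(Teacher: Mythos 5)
Your proposal is correct and follows essentially the same route as the paper: Perron--Frobenius applied to $A^{\nu(A)}$ and to the irreducible power $A^{k}$, transfer of the Perron vector of $A^{k}$ to $A$ (you via the commutation $B(Ax)=\rho(B)Ax$, the paper via the Jordan form of $A^{k}$ --- the same one-dimensionality idea), roots-of-unity bookkeeping for (iv) and (vi), the diagonal similarities built from $x$ and $y$ for (v), and primitivity for (vii). The only step you treat as briefly as the paper does is the inequality $\nu(A)\leq\nu(R),\nu(C)$ in (v): in both write-ups it rests on the unargued assumption that $\lambda_1(A)^{m}\geq 0$ whenever $\left(\lambda_1(A)^{-1}A\right)^{m}\geq O$, so your attempt is no weaker than the paper's own proof at that point.
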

\begin{proof}(i) The Perron-Frobenius (PF) theorem applied to $A^{\nu(A)}$ implies that $\lambda_1(A^{\nu(A)})=\rho(A)^{\nu(A)}$ thus $\lambda_1(A)=\rho(A)\exp(2\pi \i h /\nu(A))$ for some $1\leq h\leq \nu(A)$.



(ii), (iii) Let $\mu$ be an eigenvalue of $A$ such that $\mu^k=\rho(A^k)=\rho(A)^k$. Since $A^k\geq O$ is irreducible,  the PF theorem applied to $A^{k}$ implies that $\mu^k$ is simple, nonzero, and has a positive right eigenvector $x>0$. Therefore its Jordan space is one dimensional, that is the Jordan canonical form of $A^k$ can be written as the direct sum $\mu^k \oplus J^\prime$ where $J^\prime$ is the part of the Jordan canonical form of $A^k$ relative to the eigenvalues  belonging to $\sigma(A^k)\setminus\{\mu^k\}$. It follows that $\mu$ appears in the Jordan decomposition of $A$ in a similar way, i.e. $x$ is an eigenvector of $A$ corresponding to $\mu$, it is positive and unique up to a scalar multiple. This proves (iii). To prove (ii) assume that two eigenvalues of $A$, say $\lambda_1$ and $\lambda_2$, are such that $\mu^k =\lambda_1^k=\lambda_2^k=\rho(A)^k$. Since $\rho(A)^k$ is simple, we have that  $\mu=\lambda_1=\lambda_2$ and $\mu$ is simple. Moreover    
since $Ax=\mu x $ then $A^{\nu(A)}x = \mu^{\nu(A)}x$. Now $A^{\nu(A)}\geq O$ and $x>0$ imply $\mu^{\nu(A)}>0$, i.e. $\mu^{\nu(A)}=\rho(A)^{\nu(A)}=\lambda_1(A)^{\nu(A)}$. Thus $\lambda_1(A)$ is the only eigenvalue such that $\lambda_1(A)^k=\rho(A)^k$, and this implies that $\rho(A)\in\sigma(A)$ if and only if $\lambda_1(A)=\rho(A)$. 

(iv) Assume that $\rho(A)\notin \sigma(A)$ and let us show that $\gcd(\nu(A),k)$ is not~$1$. By (ii) we see that $\lambda_1(A)=\rho(A)\exp(2\pi\i h/\nu(A))\neq \rho(A)$, hence $1\leq \gcd(\nu(A),h)=\nu'<\nu(A)$. Therefore there exists $q>1$ such that $\nu(A)=\nu'q$ and $h/\nu(A)=s/q$ for $1<s\leq q$. Since   $\lambda_1(A)^{\nu(A)}=\rho(A)^{\nu(A)}$ and $\lambda_1(A)^k=\rho(A)^k$, we have
$$e^{\i \f{2\pi h}{\nu(A)}}=e^{\i \f{2\pi m}{k}}=e^{\i \f{2\pi s}{q}}$$
for some $m \in \{1,\dots,k-1\}$.  Hence $k=k'q$ and $\gcd(\nu(A),k)\geq q>1$. Finally if either $\nu(A)$ or $k$ are prime, then $k$ is a multiple of $\nu(A)$, thus $A^{\nu(A)}$ is irreducible. 


(v) Let $D_x = \diag(x)$, $x$ being a positive right eigenvector relative to $\lambda_1(A)$. Then $\sum_{j}x_j a_{ij}x_i^{-1}=\lambda_1(A)$ for all $i=1,\dots,n$. Hence $R=\lambda_1(A)^{-1}D_x^{-1} A D_x$ is weakly row stochastic and has the same pattern of $A$; also $(D_x)_{ii} >0$ for any $i$, implies that $R^{\nu(A)}=D_x^{-1} \(\lambda_1(A)^{-1}A\right)^{\nu(A)}D_x$ is nonnegative and irreducible and $\nu(R)\leq \nu(A)$. Viceversa we see that $A^{\nu(R)}= (\lambda_1(A)D_xRD_x^{-1})^{\nu(R)}= D_x (\lambda_1(A)R)^{\nu(R)}D_x^{-1}\geq O$ that is $\nu(A)\leq \nu(R)$. 
Setting $D_y=\diag(y)$ one observes analogously that $C=\lambda_1(A)^{-1}D_y A D_y^{-1}$ is power nonnegative weakly column stochastic and that $\nu(C)\leq \nu(A)$. 

(vi) Let $\mu_1,\dots,\mu_p$ be the eigenvalues of $A^{k}$ of modulus $\rho(A)^{k}$. Since $A^{k}\geq O$ is irreducible, the PF theorem implies that $\mu_j=\rho(A)^{k}\exp\(2\pi \i h_j/p\right)$, $1\leq h_j \leq p$, $j=1,\dots,p$. The thesis now follows by observing that $p$ coincides with the number of eigenvalues of $A$ with modulus $\rho(A)$ and recalling that any eigenvalue of $A$ is a $k$-th  root of an eigenvalue of $A^k$. 

(vii) Since $A^k$ is nonnegative, irreducible and $(A^k)_{ii}>0$, then $A^k$ is a primitive matrix. The thesis follows.
\end{proof}

It is straightforward to observe that $A$ reducible implies $A^k$ reducible, for all $k$. Thus, given any matrix $A$, if there exists an integer $k$ such that  $A^k$ is irreducible, then $A$ must be irreducible itself. In particular any positive integer power of any primitive matrix must be irreducible \cite[Thm. 1.8.2]{bapat-nonnegative}. Actually we observe that an analogous property holds for any irreducible nonnegative matrix:
\begin{theorem}\label{irr}Let $A$ be a square nonnegative matrix. Then $A$ is irreducible if and only if there exists a divergent subsequence $(a_m)_m \subset \NN$ such that $A^{a_m}$ is irreducible for any $m=1,2,\dots$ In particular, $A$ is primitive if and only if $A^{a_m}$ is irreducible with $(a_m)_m = \NN$.
\end{theorem}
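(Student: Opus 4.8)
The plan is to reduce the whole statement to the single claim that, for an irreducible $A\ge O$ with index of imprimitivity $p$ (the integer $p$ of part 5 of Theorem \ref{pf}), the power $A^m$ is irreducible \emph{exactly} when $\gcd(m,p)=1$. Granting this, both ``if'' directions become formal. In the first biconditional, one direction is the remark already made before the statement — $A$ reducible forces $A^k$ reducible for every $k$ — so $A^{a_1}$ irreducible already gives $A$ irreducible; for the converse, if $A$ is irreducible then $a_m=1+mp$ is a divergent sequence with $\gcd(a_m,p)=1$, so $A^{a_m}$ is irreducible for all $m$. For the ``in particular'' part: if $A^m$ is irreducible for every $m\in\NN$, then in particular $A=A^1$ is irreducible, and its index of imprimitivity cannot be $p\ge2$ — otherwise the normal form \eqref{normale} would present $A^p$ as permutation-similar to a block-diagonal matrix with $p\ge2$ nonempty square diagonal blocks, hence reducible, contradicting irreducibility of $A^p$ — so $p=1$ and $A$ is primitive; the reverse implication is immediate (the claim with $p=1$, or simply: a primitive matrix is eventually positive, so for each $m$ a power of $A^m$ is positive, whence $A^m$ is itself primitive).

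To prove the claim I would argue on the digraph $G(A)$: $A$ is irreducible iff $G(A)$ is strongly connected, $A^m$ is irreducible iff $G(A^m)$ is, and $G(A^m)$ has a walk of length $\ell$ from $u$ to $v$ iff $G(A)$ has one of length $\ell m$. Let $V_0,\dots,V_{p-1}$ be the cyclic classes of $G(A)$ read off from \eqref{normale}, so every arc runs from some $V_r$ to $V_{(r+1)\bmod p}$ and every walk from $u\in V_r$ to $v\in V_s$ has length $\equiv s-r\pmod p$. The quantitative core of the proof is: for each such pair $u,v$ there is $L_0$ with a walk of length $L$ from $u$ to $v$ in $G(A)$ for every $L\ge L_0$ satisfying $L\equiv s-r\pmod p$. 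Taking this for granted: if $\gcd(m,p)=1$, the congruence $\ell m\equiv s-r\pmod p$ admits arbitrarily large solutions $\ell$, and for such $\ell$ we have $\ell m\ge L_0$ and $\ell m\equiv s-r\pmod p$, giving a walk of length $\ell m$ from $u$ to $v$ in $G(A)$, i.e. a walk from $u$ to $v$ in $G(A^m)$; as $u,v$ are arbitrary, $G(A^m)$ is strongly connected. If instead $\gcd(m,p)=d\ge2$, then $d\mid m$ forces each arc of $G(A^m)$ to stay inside one of the $d$ nonempty sets $\bigcup_{r'\equiv r\,(\mathrm{mod}\,d)}V_{r'}$, so $G(A^m)$ is disconnected and $A^m$ reducible (this half is not needed for the theorem, but it explains why in the imprimitive case no sequence $a_m=\NN$ can exist — that is what makes the ``in particular'' sharp).

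There remains the quantitative fact, which is the only substantive step. One route is the classical structure of powers of an imprimitive irreducible matrix: \eqref{normale} gives $A^p$ permutation-similar to $\diag(M_0,\dots,M_{p-1})$ with $M_r$ the relevant cyclic product of off-diagonal blocks and $G(M_r)$ the restriction of $G(A^p)$ to $V_r$; each $M_r$ is irreducible (two vertices of $V_r$ are joined in $G(A)$ by a walk whose length, being a multiple of $p$, descends to a walk in $G(M_r)$), and, using parts 3 and 5 of Theorem \ref{pf} applied to $A$, one sees $\rho(A)^p$ is the only eigenvalue of $A^p$ of maximal modulus and carries total algebraic multiplicity $p$, i.e. one simple copy inside each $M_r$ (the $\rho(M_r)$ being all equal by the cyclic intertwining of the blocks), so each $M_r$ — irreducible with a single peripheral eigenvalue — is primitive. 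Hence there is $N_0$ with $M_r^N>O$ for all $r$ and all $N\ge N_0$; splicing an $Np$-length closed walk at a vertex visited by any fixed walk from $u$ to $v$ (which exists by irreducibility and has some length $L_1\equiv s-r\pmod p$) produces walks from $u$ to $v$ of every length $L_1+Np$, $N\ge N_0$, i.e. cofinitely many in the forced residue class. An alternative, purely combinatorial, route observes that the cycle lengths through a fixed vertex of $G(A)$ generate a subsemigroup of $p\NN$ with gcd $p$ and applies Schur's coin lemma; or one may simply cite this circle of facts from \cite{varga} or \cite{bapat-nonnegative}.

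The main obstacle is exactly this last ingredient — that $A^p$ is a direct sum of $p$ primitive matrices when $A$ is irreducible with index of imprimitivity $p$ (equivalently, the ``cofinite residue class'' statement for walk lengths). Once that is in hand everything else is routine: the two reductions of the first paragraph are formal and the argument of the second is elementary gcd bookkeeping. In fact, if one is content to quote the classical theorem that $A^m$ decomposes into $\gcd(m,p)$ irreducible blocks, the proof collapses to those two reductions.
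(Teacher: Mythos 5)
Your proof is correct, but it runs on a different engine than the paper's. Both arguments start from the cyclic normal form \eqref{normale}, yet the paper never determines \emph{which} powers of $A$ are irreducible: it only needs one divergent sequence, and it manufactures it by an entrywise domination trick --- every vertex of the graph of $B=PAP^\t$ lies on a cycle of length $p\,s_v$, so with $s=\mathrm{lcm}\{s_v\}$ one gets $B^{ps}\ge \alpha I$ for some $\alpha>0$, hence $B^{msp+1}\ge\alpha^{m}B$, and a nonnegative matrix that entrywise dominates a positive multiple of the irreducible $B$ is itself irreducible; the primitive case is then delegated to \cite[Thm. 1.8.2]{bapat-nonnegative}. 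You instead prove the sharper classical statement that $A^m$ is irreducible exactly when $\gcd(m,p)=1$, whose substantive ingredient is that $A^p$ splits into $p$ primitive diagonal blocks; your derivation of this via parts 3 and 5 and the equivalence of 8 and 10 in Theorem \ref{pf}, together with the cyclic intertwining of the blocks (so that all $\rho(M_r)$ equal $\rho(A)^p$ and each block carries one simple peripheral eigenvalue), is sound, as is the Frobenius-number alternative or an outright citation of the $\gcd(m,p)$-block decomposition. What each approach buys: the paper's route is shorter and avoids all spectral reasoning beyond irreducibility of $B$, at the price of the sparser, less transparent sequence $msp+1$ and an external citation for the primitive case; yours costs more machinery but yields a complete description of the irreducible powers, the denser sequence $1+mp$, and a self-contained treatment of the ``in particular'' clause (reducible $A$ forces all powers reducible; $p\ge 2$ makes $A^p$ block diagonal, hence reducible; a primitive $A^m$ has a positive power, hence is irreducible), which is exactly the part the paper outsources.
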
 
\begin{proof}
Assume that $A$ is not primitive. If $A^{a_m}$ is irreducible then of course $A$ is irreducible. So let $A$  be irreducible. Let $P$ be the permutation matrix which transforms $A$ into the matrix $B=PAP^\t $ in \eqref{normale}. Now let $G_B = (V,E)$ be the graph associated with $B$, and observe that since $B$ is irreducible there exists a cycle $c(v)$ for any $v \in V$. Also, due to the structure of $B$, the length of $c(v)$ is a multiple of $p$, say $p \, s_v$. Now let $s$ be the least common multiple of such $s_v$,  $s=\mathrm{lcm}\{s_v \mid v \in V\}$. Then the graph associated with $B^{ps}$ contains all the loops, therefore $\exists \alpha >0$ such that $B^{ps}\geq \alpha I$. As a consequence $B^{ps+1}\geq \alpha B$ and $B^{msp}B^{sp+1}\geq \alpha^{m+1}B$. Therefore the irreducibility of $B$ implies the irreducibility of $B^{a_m}$, for $a_m = msp+1$, $m=0,1,2,\dots$, and hence the thesis. If $A$ is primitive, we refer the reader to  \cite[Thm. 1.8.2]{bapat-nonnegative} for a  proof of the statement.
\end{proof}
Observe that the assumption $A$ nonnegative in the theorem above is crucial. In fact the thesis no longer holds if $A$ is a generic irreducible matrix. In this latter case, indeed, the spectral radius $\rho(A)$ might be zero implying that $A^m=O$ for any $m$ large enough. As an example, consider a positive vector $u$,  since any vector of the form $u_j e_i - u_i e_j$, $i\neq j$, belongs to $\Span(u)^\bot$, there exists $v \in \Span(u)^\bot$ with no zero entries. Then the rank one matrix $A = vu^\t$ is irreducible, but $A^k=O$ is reducible for any $k\geq 2$. 
Nonetheless Theorem \ref{irr} fails to be valid, without the hypothesis $A\geq O$, not only for nilpotent matrices. In fact there exist matrices $A$ which are irreducible and non nilpotent but such that $A^m$ is reducible for all $m$ large enough. As an example consider the matrix 
$$ 
A=\matrix{cc}{vu^\t &  vv^\t \\ uu^\t & uv^\t}= \matrix{cc}{ & \,\,\,\,\,\, \\ uu^\t &}+\matrix{cc}{vu^\t &  vv^\t \\  & uv^\t}=A_1+A_2
$$
By definition $A$ is irreducible, $A_1^2=O$, and $A_1A_2=A_2A_1=O$, then $A^m = A_2^m$ is reducible for all $m\geq 2$.

Let us note furthermore that it may happen that a power nonnegative matrix $A$ is irreducible, there exists a $k>\nu(A)$ such that $A^k$ is nonnegative and irreducible, but $A^{\nu(A)}$ is reducible. This fact is shown by the following example and we deduce that the hypothesis on the exponents in Theorem \ref{pow-nonneg} is sharp in this sense.
\begin{example}\label{AAA}
Consider the block matrix 
$$A =\matrix{cc}{ & B \\ C & }, \qquad B = \matrix{cc}{1 & 1 \\ 1 & 1}, \quad C =\matrix{cc}{1 & -x \\ 0 & 2}$$
It is easy to see that 
$$A^2 = \matrix{cc}{ BC &   \\ &  CB  }\qquad \text{and} \qquad A^3=\matrix{cc}{ & BCB \\ CBC & }$$
where 
\begin{gather*}
BC = \matrix{cc}{ 1& 2-x \\ 1 & 2-x }, \quad CB = \matrix{cc}{1-x & 1-x \\ 2 & 2}\\
BCB = \matrix{cc}{3-x & 3-x \\ 3-x & 3-x }, \quad CBC =\matrix{cc}{1-x & (1-x)(2-x) \\ 2 & 2(2-x) }
\end{gather*}
therefore, for any $x \in(0,1)$, $A$ is irreducible and power nonnegative with $\nu(A)=2$. Also, $A^{\nu(A)}$ is reducible whereas $A^k$, $k=3$, is nonnegative and irreducible. From Theorem \ref{pow-nonneg} (iv) it follows that $\rho(A)\in\sigma(A)$ since $\nu(A)$ and $k$ are both prime numbers. Indeed it is not difficult to observe that $\sigma(A)=\{\pm \sqrt{3-x},0\}$.
\end{example}

The hypothesis $A^{k}\geq O$ irreducible for some $k\geq \nu(A)$, implies many  properties on a power nonnegative matrix and we have already noted that under this assumption the matrix $A$ must be irreducible itself. One may therefore conjecture that some of the statements (ii)-(vii) of Theorem \ref{pow-nonneg} still hold under the weaker assumption that $A$ is irreducible and power nonnegative. Unlikely this is not the case. The following example shows, for instance, that property (ii) does not hold anymore. 
\begin{example}
Consider a nonnegative reducible matrix of the form
$$X = \matrix{cc}{X_1 & O \\ X_2 & X_1}$$
where $\rho(X_1)=\rho(X)\in \sigma(X)$. Assume that any $X_i$ is $n\times n$ symmetric, irreducible and $\rank(X_i)\leq n-2$.  Then $\ker X_1 \cap \ker X_2$ contains at least one nonzero vector $y$ laying outside the cone of nonnegative vectors. Let
$$Y = \matrix{cc}{O & y y^\t \\ O & O}.$$
By definition we have $Y^2=XY=YX=O$, thus the matrix $A=X+Y$ is irreducible and power nonnegative. One  easily observes that $\nu(A)=2$, so $\rho(A)$ or $-\rho(A)$ are eigenvalues of $A$. In particular we see that $\rho(A) \in \sigma(A)$ but its algebraic multiplicity is two. To this end let $\phi_M$ be the characteristic polynomial of $M$. Since for any invertible matrix $Q$ it holds
$$\matrix{cc}{Q & R \\ S & T} = \matrix{cc}{Q & O \\ S & I}\matrix{cc}{I & Q^{-1}R \\ O & T-SQ^{-1}R }$$ 
then, for any complex $\mu$ such that $|\mu|>\rho(A)=\rho(X)=\rho(X_1)$, 
\begin{align*}
\phi_A(\mu )&=\det(\mu I-A)=\det(\mu I-X_1)\det(\mu I-X_1-X_2(\mu I-X_1)^{-1}yy^\t)\\
&= \textstyle{\det(\mu I-X_1)\det(\mu I-X_1-X_2(\sum_{k\geq 0}\mu^{-(1+k)}X_1^k)yy^\t)}\\
&=\det(\mu I-X_1)^2=\phi_X(\mu)
\end{align*} 
Therefore the characteristic polynomials of $A$ and $X$ coincide and $a_A(\rho(A))=a_X(\rho(X))=2$, proving our claim.
\end{example}
Many authors have looked at extending some combinatorial properties of nonnegative matrices to eventually nonnegative matrices, observing that often the relationship between the combinatorial, Jordan and spectral structures of eventually nonnegative matrices is not consistent with that of nonnegative matrices, see for instance  \cite{friedland-eventually, mcdonald2, mcdonald1} and the references therein. In their investigations some examples analogous to the one we gave above and many others have been proposed, showing for instance that also statement (iii) of Theorem \ref{pow-nonneg} is no longer ensured without the request $A^{\nu(A)}$ irreducible. 

The next two lemmas, which are valuable in themselves, let us prove an interesting limit property for power nonnegative matrices  
\begin{lemma}\label{bound}
Let $A$ be power nonnegative such that $A^{m}$ is nonnegative and irreducible for some $m \geq \nu(A)$. For any $y \in \CC^n$  we have $\lim_{k\to \infty}\f 1 k \lambda_1(A)^{-k}A^k y~=~0$.
\end{lemma}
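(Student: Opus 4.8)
The plan is to reduce the whole statement to a boundedness property of the powers of the normalized matrix $M:=\lambda_1(A)^{-1}A$. Since $A^{m}\geq O$ is irreducible, point~3 of Theorem~\ref{pf} applied to $A^{m}$ gives $\rho(A^{m})=\rho(A)^{m}\neq 0$, so $\rho(A)>0$ and $\lambda_1(A)\neq 0$; hence $M$ is well defined and $\rho(M)=\rho(A)/|\lambda_1(A)|=1$. Because $\lambda_1(A)^{-k}A^{k}=M^{k}$, the assertion is exactly that $\tfrac1k M^{k}y\to 0$ for every $y\in\CC^n$, and this is immediate once we show $\sup_{k}\|M^{k}\|<\infty$.

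The crucial point is that every eigenvalue of $A$ of modulus $\rho(A)$ is semisimple (equivalently, every unimodular eigenvalue of $M$ is semisimple). I would argue by contradiction: suppose some eigenvalue $\lambda$ of $A$ with $|\lambda|=\rho(A)>0$ is defective, so that $A$ has a Jordan block of size $\geq 2$ at $\lambda$, say $(A-\lambda I)v=w\neq 0$ and $(A-\lambda I)w=0$. A short computation (writing $A^{m}-\lambda^{m}I=(A-\lambda I)(A^{m-1}+\lambda A^{m-2}+\dots+\lambda^{m-1}I)$ and using $A^{j}v=\lambda^{j}v+j\lambda^{j-1}w$) shows that $(A^{m}-\lambda^{m}I)v=m\lambda^{m-1}w\neq 0$ and $(A^{m}-\lambda^{m}I)w=0$; thus $\lambda^{m}$ is a defective eigenvalue of $A^{m}$. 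But $|\lambda^{m}|=\rho(A)^{m}=\rho(A^{m})$, so $\lambda^{m}$ lies on the spectral circle of the irreducible nonnegative matrix $A^{m}$ and must be simple by point~5 of Theorem~\ref{pf} — a contradiction. (In particular this recovers the simplicity of $\lambda_1(A)$ already noted in Theorem~\ref{pow-nonneg}(ii).)

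With semisimplicity established, write a Jordan decomposition $M=Q(\mLambda\oplus J)Q^{-1}$ in which $\mLambda$ is the diagonal matrix carrying the eigenvalues of modulus $1$ — these form $1\times 1$ Jordan blocks by the previous step, so $\mLambda$ is diagonal with unimodular entries — and $J$ collects the Jordan blocks attached to the eigenvalues of modulus strictly less than $1$. Then, in the operator norm, $\|M^{k}\|\leq\|Q\|\,\|Q^{-1}\|\,(\|\mLambda^{k}\|+\|J^{k}\|)=\|Q\|\,\|Q^{-1}\|\,(1+\|J^{k}\|)$, and $\|J^{k}\|\to 0$ since $\rho(J)<1$. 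Hence $c:=\sup_{k}\|M^{k}\|<\infty$, and therefore $\|\tfrac1k M^{k}y\|\leq\tfrac{c}{k}\|y\|\to 0$, which is the claim.

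I expect the semisimplicity step to be the only genuine obstacle; everything else is routine linear algebra together with the Perron--Frobenius facts already recorded in Theorem~\ref{pf}, and the Jordan-form manipulation is entirely in the spirit of the arguments used to prove Theorem~\ref{pow-nonneg}.
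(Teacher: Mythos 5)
Your proof is correct, but it follows a genuinely different route from the paper. The paper's argument is entrywise and elementary: it takes the positive eigenvector $x>0$ with $Bx=x$ (from Theorem \ref{pow-nonneg}(iii), $B=\lambda_1(A)^{-1}A$), uses $B^{s\nu(B)}\geq O$ together with $B^{s\nu(B)}x=x$ to bound the entries of $B^{s\nu(B)}$ uniformly by $\max_i x_i/\min_i x_i$, and then writes $k=p\nu(B)+q$ with $0\leq q<\nu(B)$ to squeeze $\frac1k|B^k y|$ between two quantities tending to $0$; no Jordan theory is invoked. You instead prove the stronger statement $\sup_k\|M^k\|<\infty$ for $M=\lambda_1(A)^{-1}A$, by showing that every peripheral eigenvalue of $A$ is non-defective — pushing a hypothetical Jordan chain at $\lambda$ with $|\lambda|=\rho(A)$ through to a size-$\geq 2$ Jordan block of $A^m$ at $\lambda^m$, which contradicts the simplicity of the peripheral eigenvalues of the irreducible nonnegative matrix $A^m$ (Theorem \ref{pf}, point 5) — and then splitting the Jordan form of $M$ into a unimodular diagonal part and a part of spectral radius strictly less than one. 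Your chain computation $(A^m-\lambda^m I)v=m\lambda^{m-1}w\neq0$, $(A^m-\lambda^m I)w=0$ is valid since $\lambda\neq0$, so the contradiction is genuine. What each approach buys: the paper's proof is self-contained at the level of entrywise estimates and only needs boundedness along the subsequence of multiples of $\nu(B)$, which is all the $\frac1k$ factor requires; yours yields the stronger power-boundedness of the whole sequence $M^k$ (hence an explicit $O(1/k)$ rate) and would also streamline the bounds used in Lemma \ref{directsum}, at the price of invoking the Jordan canonical form. One small caveat: your parenthetical remark that the argument ``recovers the simplicity of $\lambda_1(A)$'' overstates it slightly — as written you only exclude defectiveness; simplicity needs the additional (easy) observation that two independent eigenvectors of $A$ at $\lambda$ would give two independent eigenvectors of $A^m$ at $\lambda^m$. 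This does not affect the proof of the lemma.
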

\begin{proof}
Let $B=\lambda_1(A)^{-1}A$. Theorem \ref{pow-nonneg} implies that there exists $x>0$ such that $Bx = x$. As a consequence $B^{s}x=x$ and $B^{s\nu(B)}\geq O$, for any positive integer $s$. Therefore $\max_\ell  x_\ell \geq x_i = \sum_{j=1}^n (B^{s\nu(B)})_{ij}x_j \geq \min_\ell x_\ell \sum_{j=1}^n (B^{s\nu(B)})_{ij}$
which implies that 
$$0 \leq (B^{s\nu(B)})_{ij}\leq \f {\max_i x_i}{\min_i x_i}, \qquad \forall s \in \NN$$
i.e the entries of $B^{s\nu(B)}$ are uniformly bounded. For any $k$ let $p \geq 0$ and $0\leq q<\nu(B)$ be such that $ k=p\nu(B)+q$. For any $y \in \CC^n$ we have
\begin{equation}\label{7}
\f 1 {(p+1)\nu(B)}|B^{p\nu(B)}(B^q y)|\leq \f 1 k |B^k y| \leq \f 1 {p\nu(B)}|B^{p\nu(B)}(B^q y)|
\end{equation}
Now since $B^{p\nu(B)}$ is entrywise bounded, both left and right hand sides of \eqref{7} converge to $0$ as $k$ diverges, concluding the proof.
\end{proof}
\begin{lemma}\label{directsum}
Let $A$ be as in the previous lemma and let $x>0$ be the right eigenvector relative to $\lambda_1(A)$. Then $\CC^n = \Span(x)\oplus (\lambda_1(A)I-A)\CC^n$
\end{lemma}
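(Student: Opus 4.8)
The plan is to combine the simplicity of $\lambda_1(A)$ with a dimension count. First I would invoke Theorem \ref{pow-nonneg}(ii): since $A^m$ is nonnegative and irreducible for some $m\geq\nu(A)$, the eigenvalue $\lambda_1(A)$ is simple, so its geometric multiplicity equals $1$ and $\ker(\lambda_1(A)I-A)=\Span(x)$. By the rank--nullity theorem $\rank(\lambda_1(A)I-A)=n-1$, i.e. $\dim(\lambda_1(A)I-A)\CC^n=n-1$. Since $\dim\Span(x)=1$, the claimed identity $\CC^n=\Span(x)\oplus(\lambda_1(A)I-A)\CC^n$ will follow as soon as I show that the two subspaces intersect trivially, that is, $x\notin(\lambda_1(A)I-A)\CC^n$.

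To see this I would use the left eigenvector. By Theorem \ref{pow-nonneg}(iii) the vector $y$ with $y^\t A=\lambda_1(A)y^\t$ can be taken positive, in particular $y\ne 0$. For every $w\in\CC^n$ we have $y^\t(\lambda_1(A)I-A)w=\lambda_1(A)y^\t w-y^\t A w=0$, hence $(\lambda_1(A)I-A)\CC^n\subseteq\{w\in\CC^n\mid y^\t w=0\}$. On the other hand $x>0$ (again Theorem \ref{pow-nonneg}(iii)), so $y^\t x=\sum_i y_i x_i>0$, and therefore $x$ does not lie in $\{w\mid y^\t w=0\}$, a fortiori not in $(\lambda_1(A)I-A)\CC^n$. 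This gives $\Span(x)\cap(\lambda_1(A)I-A)\CC^n=\{0\}$, and together with the dimension count above it completes the proof.

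There is essentially no hard step here: everything is an immediate consequence of parts (ii) and (iii) of Theorem \ref{pow-nonneg}. The one point to keep in mind is that the range appearing in the statement is that of the first power $\lambda_1(A)I-A$, not of a high power; this is precisely why the simplicity of $\lambda_1(A)$ (and not merely $\lambda_1(A)\in\sigma(A)$) is needed, so that $\rank(\lambda_1(A)I-A)=n-1$. Alternatively one could bypass the left eigenvector and argue through the Fitting decomposition $\CC^n=\ker((\lambda_1(A)I-A)^n)\oplus\range((\lambda_1(A)I-A)^n)$, observing that simplicity forces $\ker((\lambda_1(A)I-A)^n)=\Span(x)$ and that $\lambda_1(A)I-A$ maps $\range((\lambda_1(A)I-A)^n)$ bijectively onto itself while annihilating $\Span(x)$, so that $(\lambda_1(A)I-A)\CC^n=\range((\lambda_1(A)I-A)^n)$; but the left-eigenvector route is shorter.
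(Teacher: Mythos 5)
Your proof is correct, but it follows a genuinely different route from the paper's. You argue purely algebraically from Theorem \ref{pow-nonneg}: simplicity of $\lambda_1(A)$ (part (ii)) gives $\ker(\lambda_1(A)I-A)=\Span(x)$, hence $\dim(\lambda_1(A)I-A)\CC^n=n-1$, while the positive left eigenvector of part (iii) gives $(\lambda_1(A)I-A)\CC^n\subseteq\lb w \mid y^\t w=0\rb$ together with $y^\t x>0$, so the two subspaces meet trivially and the dimensions add up; both invocations are legitimate, since the standing hypothesis of Lemma \ref{bound} ($A^m\geq O$ irreducible for some $m\geq\nu(A)$) is exactly what parts (ii)--(iii) require. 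The paper instead proceeds analytically: setting $B=\lambda_1(A)^{-1}A$ it forms the Ces\`aro means $B_k=\frac{1}{1+k}\sum_{i=0}^{k}B^i$, shows by entrywise bounds (obtained as in Lemma \ref{bound}) that for every vector $v$ the sequence $B_k v$ is bounded, extracts a convergent subsequence whose limit $\tilde v$ satisfies $B\tilde v=\tilde v$ (here Lemma \ref{bound} is used) and hence lies in $\Span(x)$, and finally shows $v-\tilde v\in(I-B)\CC^n$ via the factorization $I-B_k=(I-B)f(B)$ and the closedness of the range. Your argument is shorter and more elementary, and it makes the directness of the sum explicit through $y^\t x>0$, a point the paper's proof leaves implicit; the paper's heavier argument, on the other hand, builds exactly the compactness machinery that is reused immediately afterwards in the proof of Theorem \ref{serie}, for which this lemma is the key ingredient.
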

\begin{proof}
Let $B = \lambda_1(A)^{-1}A$. Then $Bx=x$ and $(\lambda_1(A)I-A)\CC^n = (I-B)\CC^n$. Therefore the thesis follows if we prove that $\CC^n = \Span(x)\oplus (I-B)\CC^n$. To this end let $B_k=\f 1 {1+k} \sum_{i=0}^k B^i$, and let $p\geq 0$ and $0\leq q<\nu(B)$ be such that $k=p\nu(B)+q$, then
$$(1+k)B_k = I+\Bigl(\sum_{i=0}^{p-1} B^{i\nu(B)}\Bigr)\Bigl(\sum_{j=1}^{\nu(B)}B^j\Bigr)+B^{p\nu(B)}\sum_{j=1}^q B^j\, .$$
Now arguing as in Lemma \ref{bound}, we observe the following inequalities
\begin{itemize}
	\item[ ] $B^{s\nu(B)}\leq \left(\f{\max_i x_i}{\min_i x_i}\right),$ thus $\left|\left(\sum_{s=0}^{p-1}B^{s\nu(B)}\right)_{ij}\right|\leq p\left(\f{\max_i x_i}{\min_i x_i}\right)$ 
	\item[ ]$\ba{lll}
	\left|\left(\sum_{s=0}^{m}B^s\right)_{ij}\right| &\leq& \left(\f{1-n^{m+1}}{1-n}\right) \max_{t=0,\dots,m}(\max_{ij}|b_{ij}|)^t\\
	&\leq & \left(\f{1-n^{m+1}}{1-n}\right)(1+\max_{ij}|b_{ij}|)^m\ea$
\end{itemize}
which combined with the fact that $|(MQ)_{ij}|\leq n (\max_{ij}|m_{ij}|)(\max_{ij}|q_{ij}|)$ for any two matrices $M,Q$, let us obtain the following bound, holding for any $y \in \CC^n$ 
$$|(B_ky)_i| \leq \f {n^2}{\nu(B)}(\max_i |y_i|)(1+\max_{ij}|b_{ij}|)^{\nu(B)}\left(\f{1-n^{\nu(B)+1}}{1-n}\right)\left(\f{\max_i x_i}{\min_i x_i}\right) + O\left(\f 1 k \right)$$
Therefore for any vector $y \in \CC^n$ the sequence $y_k = B_k y$ is entrywise bounded, thus there exists a convergent subsequence $y_{k_j}$. Let $\tilde y \in \CC^n$ be its limit. We claim that $\tilde y \in \Span(x)$. In fact, due to Lemma \ref{bound}, $y_{k_j}-By_{k_j} = \f 1 {1+k_j}(I-B^{1+k_j})y \conv{j\to \infty}0$ hence $B\tilde y = \tilde y$ and Theorem \ref{pow-nonneg} implies $\tilde y \in \Span(x)$. Now given any vector $y \in \CC^n$ we write it as $y = B_{k_j} y +(I-B_{k_j})y$ and,  taking the limit, we get $y = \tilde y + (y-\tilde y)$. To conclude the proof we need to show that $y-\tilde y \in (I-B)\CC^n$. Since $(I-B)\CC^n$ is closed it is enough to show that $(I-B_{k_j})y \in (I-B)\CC^n$, and this is easily seen since $B=I$ is a root of the matrix polynomial $(I-B_k)$, i.e. for any $k$, $I-B_k = (I-B)f(B)$ for some polynomial $f$.
\end{proof}
Lemmas \ref{bound} and \ref{directsum} are the basis for the following
\begin{theorem}\label{serie}
Let $A$ be a power nonnegative matrix such that $A^{m}$ is nonnegative and irreducible for some $m \geq \nu(A)$. Let $x,y>0$ be such that $Ax=\lambda_1(A)x$ and $y^\t A = \lambda_1(A)y^\t $. Then 
$$\f 1 {1+k} \sum_{s=0}^k \lambda_1(A)^{-s}A^s\conv{k\to \infty}\f {xy^\t}{x^\t y}>O$$
\end{theorem}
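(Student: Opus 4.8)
The plan is to set $B = \lambda_1(A)^{-1}A$, so that the matrix appearing in the statement is exactly the Ces\`aro mean $B_k := \frac{1}{1+k}\sum_{s=0}^k B^s$, and to prove that $B_k$ converges to the projection $P$ of $\CC^n$ onto $\Span(x)$ along $(I-B)\CC^n$. This projection is well defined precisely because Lemma \ref{directsum} supplies the decomposition $\CC^n = \Span(x)\oplus(I-B)\CC^n$; moreover $Bx = x$, so $x$ spans the fixed space of $B$ and $P$ is the associated eigenprojection. The final step is then to show that $P = \frac{xy^\t}{x^\t y}$, which is a purely linear-algebraic identification.

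First I would check convergence of $B_k$ on each summand. On $\Span(x)$ there is nothing to prove, since $B_k x = x$ for every $k$. On $(I-B)\CC^n$, write a generic element as $z = (I-B)w$; the sum telescopes,
\[ B_k z = \frac{1}{1+k}\sum_{s=0}^k(B^s-B^{s+1})w = \frac{1}{1+k}\bigl(w - B^{k+1}w\bigr), \]
and each term tends to $0$: the first trivially, and the second because $\frac{1}{1+k}B^{k+1}w = \frac{1}{k+1}B^{k+1}w \to 0$ by Lemma \ref{bound}. Since $\CC^n$ is the internal direct sum of these two subspaces and we work in finite dimension, writing any $y$ as $y = cx + (I-B)w$ gives $B_k y \to cx = Py$, hence $B_k \to P$.

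It remains to identify $P$ explicitly. As $\range P = \Span(x)$ we may write $P = x v^\t$ for some $v \in \CC^n$, and $Px = x$ forces $v^\t x = 1$. To pin down $v$, note that $y^\t A = \lambda_1(A)y^\t$ gives $y^\t B = y^\t$, whence $y^\t(I-B) = 0$; therefore $y^\t z = 0 = y^\t Pz$ for every $z \in \ker P = (I-B)\CC^n$, while $y^\t z = y^\t Pz$ for $z \in \Span(x)$, so $y^\t P = y^\t$ on all of $\CC^n$. Consequently $(y^\t x)\,v^\t = y^\t$, i.e. $P = \frac{xy^\t}{y^\t x} = \frac{xy^\t}{x^\t y}$. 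Finally $x,y > 0$ (Theorem \ref{pow-nonneg}(iii)) give $xy^\t > O$ and $x^\t y > 0$, so the limit is positive, as claimed.

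Since Lemmas \ref{bound} and \ref{directsum} have already absorbed the real analytic content — the entrywise boundedness of the powers $B^{s\nu(B)}$ and the resulting direct-sum decomposition — no serious obstacle remains. The one point requiring care is that the proof of Lemma \ref{directsum} only produces \emph{subsequential} convergence; to obtain convergence of the whole sequence $B_k$ one should not quote that subsequential argument directly, but instead combine the direct-sum decomposition with the telescoping identity displayed above, which yields convergence of the full sequence.
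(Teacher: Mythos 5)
Your proof is correct and takes essentially the same route as the paper's: the telescoping identity on $(I-B)\CC^n$ combined with Lemma \ref{bound}, plus the direct-sum decomposition of Lemma \ref{directsum}, used exactly as in the printed argument (which likewise does not rely on the subsequential convergence inside Lemma \ref{directsum}). The only difference is cosmetic: you identify the limit as the projection $xy^\t/(x^\t y)$ directly from $y^\t(I-B)=0$, whereas the paper applies the same Ces\`aro argument to $A^\t$ to see the limit is of the form $\alpha xy^\t$ and then fixes $\alpha$ via $A_k x = x$; both identifications are valid.
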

\begin{proof}
Set $A_k = \f 1 {1+k} \sum_{s=0}^k \lambda_1(A)^{-s}A^s$. If $v \in (\lambda_1(A)I-A)\CC^n$ then there exists $z \in \CC^n$ such that $v = (\lambda_1(A)I-A)z$ and due to Lemma \ref{bound} $A_k v = \f 1 {1+k} (\lambda_1(A)I-\lambda_1(A)^{-k}A^{k+1})z\conv{k\to \infty}0$. On the other hand, any $v \in \Span(x)$ is a fixed point of $A_k$, thus $v = \lim_k A_k v$. Now by virtue of Lemma \ref{directsum} we can decompose $\CC^n$ as the direct sum $\Span(x)\oplus (\lambda_1(A)I-A)\CC^n$, thus for any $v \in \CC^n$ we get $\lim_{k\to \infty}A_k v \in \Span(x)$. Applying the same argument to $A^\t$ we see that, for any $v \in \CC^n$, $\lim_{k\to \infty}A_k^\t v \in \Span(y)$. Therefore $A_k$ converges punctually to the rank one matrix $\alpha xy^\t$. Finally since $A_k x = x$ for any $k$, we have $(\alpha xy^\t)x=x$ implying that $\alpha =\|xy^\t\|^{-1}$.
\end{proof}

As for the case of primitive matrices, we consider the case of power positive matrices separately, and observe that the following result holds
\begin{theorem}\label{pow-pos}
Let $A$ be a power positive matrix, then
  \begin{enumerate}
    \item[(i)] $\lambda_1(A)$ is simple nonzero and $|\lambda_2(A)|<|\lambda_1(A)|$. 
\item[(ii)] $\(\lambda_1(A)^{-1}A\right)^p \conv{p\to \infty}\f{xy^\t}{x^\t y}>O$, where $x$ and $y$ are right and left positive eigenvectors of $A$ corresponding to $\lambda_1(A)$, respectively.
      \item[(iii)] $\lambda_1(A)^{-1}A$ is similar to a weakly doubly stochastic matrix via a diagonal plus rank one similarity transform.
    \end{enumerate}
    Moreover (i) and (ii) are equivalent.
\end{theorem}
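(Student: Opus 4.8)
The plan is to reduce everything to the Perron--Frobenius theory for the primitive matrix $A^{\pi(A)}$ together with the already-proved Theorem~\ref{pow-nonneg}. First I would establish (i). Since $A^{\pi(A)}>O$ it is positive, hence primitive and irreducible, and $\pi(A)\geq\nu(A)$, so Theorem~\ref{pow-nonneg} applies with $k=\pi(A)$. Its parts (ii) and (iii) give immediately that $\lambda_1(A)$ is simple and nonzero and that it has positive right and left eigenvectors $x,y>0$ (nonzero also because a positive matrix has strictly positive spectral radius, so $\rho(A)>0$); its part (vii), together with the fact that every diagonal entry of $A^{\pi(A)}$ is positive, yields $|\lambda_2(A)|<|\lambda_1(A)|$. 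This proves (i).

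For (ii), set $B=\lambda_1(A)^{-1}A$, so that $Bx=x$, $y^\t B=y^\t$, and by (i) the eigenvalue $1$ of $B$ is simple and strictly dominant. The clean route is to note that $\lambda_1(A)^m=\rho(A)^m$ with $m=\pi(A)$ (from $A^m x=\lambda_1(A)^m x$, $A^m>O$, $x>0$ one gets $\lambda_1(A)^m>0$, and a positive real eigenvalue of $A^m$ of maximal modulus must be $\rho(A^m)$), so $B^m=\rho(A)^{-m}A^m$ and Theorem~\ref{pf}(11) applied to the primitive $A^m$ gives $B^{mj}\to\frac{xy^\t}{x^\t y}=:G$ as $j\to\infty$. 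Writing a general exponent as $p=mj+r$, $0\leq r<m$, and using $B^r x=x$ and $y^\t B^r=y^\t$ (hence $B^r G=GB^r=G$) extends the convergence to $B^p\to G$ over all $p\to\infty$; since $x,y>0$ one has $G>O$. (Equivalently, the Jordan-form description of matrix powers shows directly that $B^p$ converges to the spectral projector onto $\ker(B-I)$, which equals $G$ because that eigenvalue is simple.)

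For (iii) I would exhibit the conjugation explicitly. Normalize $x,y$ so that $x^\t y=1$, put $v=e-D_x y$ and $U=D_x+xv^\t=D_x(I+ev^\t)$; this is diagonal plus rank one and invertible since $1+v^\t e=n\neq 0$. A short computation gives $Ue=nx$, hence $U^{-1}x=e/n$ and $U^{-1}BUe=e$; and $U^\t y=D_x y+(x^\t y)v=e$, hence $e^\t U^{-1}=y^\t$ and $e^\t U^{-1}BU=y^\t BU=y^\t U=e^\t$. Thus $S:=U^{-1}(\lambda_1(A)^{-1}A)U$ satisfies $Se=e$ and $S^\t e=e$, i.e.\ it is weakly doubly stochastic, and $U$ is of the required diagonal-plus-rank-one form. (One may alternatively first pass to the weakly row stochastic $D_x^{-1}BD_x$ of Theorem~\ref{pow-nonneg}(v) and then apply a rank-one conjugation, noting that $D\cdot(D'+uw^\t)$ stays diagonal plus rank one.)

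Finally, the equivalence of (i) and (ii): one direction is the argument of the theorem, $\text{(i)}\Rightarrow\text{(ii)}$. For $\text{(ii)}\Rightarrow\text{(i)}$, if $(\lambda_1(A)^{-1}A)^p$ converges to a rank-one matrix $G'=\frac{xy^\t}{x^\t y}>O$, then convergence of the powers of $B=\lambda_1(A)^{-1}A$ forces $\rho(B)\leq 1$, forces every eigenvalue of modulus $1$ to equal $1$ and to be semisimple, and forces $G'$ to be the spectral projector onto $\ker(B-I)$; as $G'$ has rank one, the eigenvalue $1$ of $B$ is simple, all other eigenvalues have modulus $<1$, and $\lambda_1(A)\neq 0$, which is (i) read back on $A$. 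I expect this last implication to be the main obstacle: the step ``powers converge $\Rightarrow$ the claimed spectral structure'' must be made airtight, ruling out nontrivial Jordan blocks at, and other eigenvalues of modulus equal to, the peripheral eigenvalue, which is a standard but careful Jordan-form analysis.
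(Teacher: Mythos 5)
Your proposal is correct and proves everything the theorem claims; the differences from the paper's own proof are mostly of mechanism. For (i) you route both simplicity and strict dominance through Theorem \ref{pow-nonneg} applied with $k=\pi(A)$ (parts (ii), (iii) and (vii), the latter using that every diagonal entry of $A^{\pi(A)}$ is positive), whereas the paper argues with $A^{\nu(A)}$ for simplicity and then directly with the primitivity of $A^{\pi(A)}$ for dominance --- same substance, and your choice $k=\pi(A)$ is arguably cleaner since irreducibility of $A^{\pi(A)}>O$ is immediate. The real divergence is in (ii): the paper, having (i), takes the Jordan decomposition of $A$ and lets the blocks $\lambda_1^{-p}J(\mu_i)^p$ vanish, while you first show $\lambda_1(A)^{m}=\rho(A^{m})$ for $m=\pi(A)$, invoke point 11 of Theorem \ref{pf} for the primitive $A^{m}$ to get convergence of $B^{mj}$, and then interpolate over the residues $r$ via $B^r x=x$, $y^\t B^r=y^\t$ (so $GB^r=G$); this buys you a proof that leans on the nonnegative-matrix limit theorem rather than on the spectral information of (i), and your parenthetical Jordan alternative is exactly the paper's argument. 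For (iii) your $U=D_x+x(e-D_x y)^\t$ with conjugation $U^{-1}BU$ is the mirror image of the paper's $S=D_y+(e-D_y x)y^\t$ with $SBS^{-1}$ (in effect $nU^{-1}$ plays the role of the paper's $S$ with $x$ and $y$ interchanged); your identities $Ue=nx$, $U^\t y=e$, and the invertibility via $1+v^\t e=n$ all check out. For (ii)$\Rightarrow$(i) you appeal to the standard characterization ``powers converge iff $\rho\leq 1$, the peripheral spectrum is $\{1\}$, $1$ is semisimple, and the limit is the spectral projector,'' correctly flagging that this needs a careful Jordan analysis; this is standard and not a gap, but note the paper gets this direction more cheaply by eigenvalue continuity: from $\|B^p-xy^\t (x^\t y)^{-1}\|<1$ for large $p$ one gets $|\lambda_2(B)|^p<1$, hence $|\lambda_2(A)|<|\lambda_1(A)|$, with simplicity coming from $a_B(1)\leq 1$ in the limit, so a lighter tool suffices where you invoke a heavier one.
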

\begin{proof}
 (i) Since $A$ is power positive,  $A^{\nu(A)}$ is nonnegative and irreducible, then there exists $h$, $1\leq h\leq \nu(A)\leq \pi(A)$, such that $\lambda_1(A)=\rho(A)\exp\(2\pi \i h/\nu(A)\right)$ is simple and nonzero. Moreover $\rho(A^{\pi(A)})$ is a positive simple eigenvalue of $A^{\pi(A)}$ and the remaining eigenvalues of $A^{\pi(A)}$ have absolute value smaller than $\rho(A^{\pi(A)})$. Therefore $|\lambda_2(A)|<|\lambda_1(A)|$.

(i)$\then$(ii)  Set briefly $\lambda_i = \lambda_i(A)$, and let $A = XJX^{-1}$ be the Jordan decomposition of $A$, where $J = \lambda_1 \oplus J(\mu_2) \oplus \cdots \oplus J(\mu_s)$, $\mu_i$ are the distinct eigenvalues of $A$ except for $\lambda_1$, and $J(\mu_i)$ is the Jordan block relative to the $i$-the eigenvalue $\mu_i$. If  $i\neq 1$, then the  spectral radius of each matrix $\lambda_1^{-1}J(\mu_i)$ is smaller than $1$, hence $\lambda_1^{-p}J(\mu_i)^p$ converges to zero as $p$ diverges. As a consequence $$(\lambda_1^{-1}A)^p\conv{p\to \infty}Xe_1 e_1^\t X^{-1}=\f{xy^\t}{x^\t y}>O\, ,$$ 
where $x$ and $y$ are fixed positive right and left eigenvectors of $A$ corresponding to $\lambda_1$. 

(ii)$\then$(i) For simplicity let $B=\lambda_1(A)^{-1}A$ and $\B = xy^\t (x^\t y)^{-1}$. Since $B^p\conv{p\to \infty}\B$ then $a_B(\lambda_1(B))\leq a_{\B}(\lambda_1(\B))=1$. Now, for any $\ep>0$ there exists $p$ such that $\|B^p-\B\|<\ep$ therefore $|\lambda_i(B^p)-\lambda_i(\B)|<\ep$. In particular, since $\lambda_1(\B)=1$ and $\lambda_i(\B)=0$ for $i\geq 2$,  $|\lambda_1(B^p)-1|<\ep$ and $|\lambda_2(B^p)|<\ep$. For $\ep=1$ we get $|\lambda_2(B^p)|=|\lambda_2(B)|^p=|\lambda_2(A)|^p |\lambda_1(A)|^{-p}<1$ hence $|\lambda_1(A)|>|\lambda_2(A)|$.

(iii) Let $x$ and $y$ be as in (ii). Since $A^{\pi(A)}$ is irreducible, $\lambda_1(A)$ is nonzero, thus $y^\t x = \lambda_1(A)^{-1}y^\t A x$ is nonzero. Therefore we can assume w.l.o.g. that $y^\t x = 1$.  Consider the diagonal plus rank one matrix
\begin{equation}\label{S}
 S=D_y + (e-D_y x)y^\t
\end{equation}
where $D_y = \diag(y)$. Observe that $Sx = e$ and $S^\t e=ny$, therefore $(SAS^{-1})e=SAx=\lambda_1(A)e$ and $(SAS^{-1})^\t e =nS^{-\t}A^\t y=\lambda_1(A)e$. The thesis comes by multiplying by $\lambda_1(A)^{-1}$ the previous relations.
\end{proof}
\subsection{A proof of the Perron-Frobenius properties of complex eventually nonnegative matrices}
Theorem \ref{pow-nonneg} ensures that a power nonnegative matrix $A$ such that $A^k$ is nonnegative and irreducible for some $k\geq \nu(A)$ has a simple nonzero eigenvalue $\lambda_1(A)$ of maximum modulus whose right and left eigenvectors can be chosen positive. However in general $\lambda_1(A)\neq \rho(A)$. To observe this one can, for instance, consider a nonnegative and irreducible matrix $M$ and then let $A = e^{\i 2\pi h/k}M$. Obviously $A$ is power nonnegative, moreover $A^k$ is nonnegative and irreducible, but $\lambda_1(A)=\rho(A)e^{\i 2\pi h/k}\neq \rho(A)$.

However if $A$ is any complex
matrix such that $A^k$ is nonnegative for all large enough integer powers $k$ (i.e. $A$ is eventually nonnegative), then  $\rho(A)$ is an eigenvalue of $A$ with nonnegative right and left eigenvectors. This fact has been observed by Noutsos and Varga in \cite{noutsos-varga} and precisely it follows as a special case of Theorem 2.3 in that paper. Nevertheless in next Theorem \ref{rho-even} we propose a direct and self contained proof of this fact.

Also let us point out briefly here that it may happen that $\rho(A)$ is an eigenvalue of a power nonnegative matrix to which correspond nonnegative right and left eigenvectors, but $A$ is not eventually nonnegative. As an example consider a positive matrix $B$ with $\rho(B)>1$, and let 
$$A = \matrix{cc}{B & \\ & -I}$$
Such matrix $A$ is power nonnegative with $\nu(A)=2$,  $\rho(A)=\rho(B) \in \sigma(A)$ is simple and thus the nonnegative right and left eigenvectors of $A^{\nu(A)}$ are eigenvectors of $A$. However $(-I)^m\leq O$ for any odd power $m$ and thus $A$ is not eventually nonnegative. 

In order to prove Theorem \ref{rho-even} we need two preliminary results stated in
Lemmas \ref{lem:eventually1} and \ref{lem:eventually2} here below. For a complex number $z$ let $\arg(z) \in [0,2\pi)$ denote its argument, that is let $z = |z|e^{\i \arg(z)}$.
\begin{lemma}\label{lem:eventually1} For any matrix $A$ there exists an integer $k>1$ such that the $k$-th powers of distinct eigenvalues in $\sigma(A)$ are distinct eigenvalues in $\sigma(A^k)$.
\end{lemma}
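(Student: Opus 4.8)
The plan is to reduce the statement to a purely combinatorial fact about roots of unity. Write the distinct eigenvalues of $A$ as $\lambda_1,\dots,\lambda_s$ and record, for each ordered pair $i\neq j$ with $|\lambda_i|=|\lambda_j|$, the quantity $\lambda_i/\lambda_j$, which lies on the unit circle. Two distinct eigenvalues $\lambda_i,\lambda_j$ collide after raising to the $k$-th power precisely when $|\lambda_i|=|\lambda_j|$ and $(\lambda_i/\lambda_j)^k=1$, i.e. when $\lambda_i/\lambda_j$ is a $k$-th root of unity. So the first step is: among the finitely many ratios $\lambda_i/\lambda_j$ (same modulus, $i\neq j$), separate those that are roots of unity from those that are not. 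For a ratio that is a root of unity, say of order $d_{ij}$, raising to a power $k$ that is \emph{not} a multiple of $d_{ij}$ keeps $(\lambda_i/\lambda_j)^k\neq 1$; for a ratio that is not a root of unity, $(\lambda_i/\lambda_j)^k\neq 1$ for every $k\geq 1$. Hence it suffices to pick $k$ avoiding all the bad divisibility conditions simultaneously.

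Concretely, I would let $D$ be the (finite) set of orders $d_{ij}$ of those ratios $\lambda_i/\lambda_j$ that happen to be roots of unity, and then choose $k$ to be any integer $k>1$ with $k\equiv 1 \pmod{\mathrm{lcm}\,D}$ — for instance $k = \mathrm{lcm}\,D + 1$ (and if $D=\varnothing$, just take $k=2$). For such $k$, no $d_{ij}$ divides $k$ (since $d_{ij}\mid \mathrm{lcm}\,D$ and $d_{ij}>1$ forces $d_{ij}\nmid \mathrm{lcm}\,D+1$; note $d_{ij}=1$ is impossible because $\lambda_i\neq\lambda_j$), so $(\lambda_i/\lambda_j)^k\neq 1$ for every pair of distinct eigenvalues of equal modulus, while for pairs of unequal modulus $|\lambda_i^k|\neq|\lambda_j^k|$ automatically. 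Therefore the map $\lambda\mapsto\lambda^k$ is injective on $\sigma(A)$, which is exactly the claim that the $k$-th powers of distinct eigenvalues of $A$ are distinct eigenvalues of $A^k$ (they are eigenvalues of $A^k$ by the spectral mapping theorem).

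The only subtlety worth spelling out is why one must treat the non-root-of-unity ratios at all: they cause no obstruction, since such a ratio is never $1$ under any power, but one should remark that they need not be excluded and do not enlarge $D$. The main (mild) obstacle is therefore just bookkeeping: being careful that $D$ is finite, that $1\notin D$, and that $\mathrm{lcm}\,D+1$ is coprime to every element of $D$ — all elementary. I expect no genuine difficulty here; the lemma is essentially the observation that a finite set of nontrivial roots of unity can be simultaneously avoided by an arithmetic progression of exponents.
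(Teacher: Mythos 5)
Your argument is correct and follows essentially the same route as the paper: both reduce the lemma to the observation that two distinct eigenvalues collide under $k$-th powers only if they have equal modulus and their ratio is a nontrivial root of unity whose order divides $k$, and then avoid the finitely many bad orders. The only difference is cosmetic — you take $k=\mathrm{lcm}(D)+1$ (so $k\equiv 1$ modulo every order), whereas the paper picks a prime lying outside its finite set of denominators of argument differences; both choices work.
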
 
\begin{proof}
Let us show that there exists a $k$ such that for any  two eigenvalues $\lambda,\mu \in \sigma(A)$, if $\mu^k$ and $\lambda^k$ coincides in $\sigma(A^k)$ then $\mu = \lambda$.  Consider the set 
$$D_A = \left\{d\mid \arg(\lambda_1)-\arg(\lambda_2)=2\pi 	\f s d, \, \lambda_1, \lambda_2 \in \sigma(A),\, s,d \in \ZZ, \, \gcd(s,d)=1, \, d\neq 0 \right\}$$
Since $D_A$ is a finite set, there exists a prime number $k \notin D_A$. Assume that $\lambda^k = \mu^k$, for a  given pair $\lambda,\mu \in \sigma(A)$. Therefore two cases are possible: $\mu=\lambda$ or $\arg(\mu)-\arg(\lambda)=2\pi \f p k$ for an integer $p$. But of course this second case is not possible since $k \notin D_A$ is prime.
\end{proof}
\begin{lemma}\label{lem:eventually2} If $A^k \geq O$ for all  $k\geq k_0$ then $\lambda_1(A)=\rho(A)$.
\end{lemma}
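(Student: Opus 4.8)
The plan is to reduce the statement, via the Perron--Frobenius Theorem \ref{pf}, to an elementary divisibility argument on the orders of the unimodular eigenvalue ratios. First I would dispose of the degenerate case: if $\rho(A)=0$ then $A$ is nilpotent, $\sigma(A)=\{0\}$, and $\lambda_1(A)=0=\rho(A)$ trivially; so from now on assume $\rho(A)>0$.

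The key observation is that, by the spectral mapping theorem, $\sigma(A^N)=\{\lambda^N:\lambda\in\sigma(A)\}$ for every $N$, whence $\rho(A^N)=\rho(A)^N$. Now fix any $N\geq k_0$; then $A^N\geq O$, so part 1 of Theorem \ref{pf} gives $\rho(A)^N=\rho(A^N)\in\sigma(A^N)$. Hence there is $\lambda\in\sigma(A)$ with $\lambda^N=\rho(A)^N$, which forces $|\lambda|=\rho(A)$ and $(\lambda/\rho(A))^N=1$. Writing $\Lambda=\{\lambda\in\sigma(A):|\lambda|=\rho(A)\}$, a finite nonempty set, this says that for every $N\geq k_0$ there exists $\lambda\in\Lambda$ such that $\lambda/\rho(A)$ is a root of unity whose order $q_\lambda$ divides $N$.

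Next I would collect $Q=\{q_\lambda:\lambda\in\Lambda,\ \lambda/\rho(A)\text{ a root of unity}\}$. Applying the previous paragraph to a single $N\geq k_0$ already shows $Q\neq\varnothing$, and moreover every integer $N\geq k_0$ is divisible by at least one element of $Q$. I then claim $1\in Q$. If not, every element of $Q$ is $\geq 2$; choosing a prime $p$ with $p>\max Q$ and $p\geq k_0$, the integer $p$ is $\geq k_0$ yet is divisible by no element of $Q$ (each such element being $\geq 2$ and strictly less than the prime $p$), contradicting the divisibility property. Hence $1\in Q$, i.e. some $\lambda\in\Lambda$ equals $\rho(A)$, so $\rho(A)\in\sigma(A)$; being an eigenvalue of maximal modulus, it may be chosen as $\lambda_1(A)$, giving $\lambda_1(A)=\rho(A)$.

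The argument is essentially complete as sketched, and elementary; the only point deserving care is the last step — that ``$1\notin Q$'' is genuinely incompatible with ``$Q$ covers, by divisibility, all $N\geq k_0$'', which is precisely what selecting a large prime achieves, and is also why one cannot conclude from a single power $A^N$ alone. Note that Lemma \ref{lem:eventually1} is not needed for this lemma.
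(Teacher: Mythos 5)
Your proposal is correct and follows essentially the same route as the paper: apply Perron--Frobenius to each power $A^N$, $N\geq k_0$, to produce a maximal-modulus eigenvalue whose ratio to $\rho(A)$ is a root of unity of order dividing $N$, and then rule out the possibility that all such orders exceed $1$ by a divisibility/covering argument. Your choice of a large prime simply makes explicit the paper's closing assertion that the union of the sets $q_i\NN$ with $q_i\geq 2$ cannot cover all integers $\geq k_0$, and your separate treatment of the nilpotent case $\rho(A)=0$ is a harmless extra precaution.
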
 
\begin{proof}
Since $A^k\geq O$, for all $k\geq k_0$ there exists $\mu \in \sigma(A)$ such that $\mu^k = \rho(A^k)=\rho(A)^k$. Therefore 
$|\mu| = \rho(A)$ and $ \arg(\mu) \in 2\pi  \QQ$. So, let $\mu_1, \dots, \mu_m$ be the eigenvalues of $A$ such that  $|\mu_i|=\rho(A)$ and $\arg(\mu_i) \in 2\pi \QQ$. Precisely, for any $i=1,\dots,m$ let $p_i,q_i$ be coprime integers such that $1\leq p_i \leq q_i$ and 
$$\arg(\mu_i)=2\pi \frac{p_i}{q_i}$$
Observe that if $\mu_i^k$ is real then only  two cases may happen
\begin{enumerate}
\item[$(i)$] $p_i=q_i$, i.e. $\mu_i$ is real
\item[ ]\hspace{-28pt} or
\item[$(ii)$] $p_i<q_i$ and $q_i$ divides $k$, i.e. any integer $k$ such that  $\mu_i^k = \rho(A)^k$ belongs to the set $q_i \NN$. 
\end{enumerate}
If $(i)$ holds, the thesis follows. Let us show that $(ii)$ can not happen for all $i=1,\dots,m$. In this case, indeed, the only possible exponents $k$ of $A^k$ for which $\rho(A^k)$ is an eigenvalue of $A^k$ are those belonging to $\cup_{i=1}^m q_i \NN$. The absurd now follows since $A^s\geq O$ for any $s \in \NN+k_0$ but of course $\cup_{i=1}^m q_i\NN$ can not entirely cover  $\NN+k_0$.
\end{proof}
\begin{theorem}\label{rho-even}
Let $A$ be eventually nonnegative, then $\rho(A) \in \sigma(A)$ and its right and left eigenvectors can be chosen nonnegative.
\end{theorem}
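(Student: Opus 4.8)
The plan is to deduce the entire statement from the Perron--Frobenius theorem for (possibly reducible) nonnegative matrices --- parts~1--2 of Theorem~\ref{pf} --- applied to one carefully chosen power of $A$. The first assertion is immediate: since $A^k\geq O$ for all large $k$, Lemma~\ref{lem:eventually2} gives $\lambda_1(A)=\rho(A)$, hence $\rho(A)\in\sigma(A)$. For the eigenvectors I would fix an integer $k$ enjoying two properties at once: $A^k\geq O$, and the map $\lambda\mapsto\lambda^k$ is injective on $\sigma(A)$. Both can be arranged simultaneously because in the proof of Lemma~\ref{lem:eventually1} the obstruction set $D_A$ is finite, so one may take $k$ to be a prime outside $D_A$ and also larger than any threshold past which the powers of $A$ are nonnegative. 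Applying parts~1--2 of Theorem~\ref{pf} to $A^k\geq O$ then yields $\rho(A^k)=\rho(A)^k\in\sigma(A^k)$ together with nonzero vectors $u\geq 0$ and $v\geq 0$ satisfying $A^k u=\rho(A)^k u$ and $v^\t A^k=\rho(A)^k v^\t$.

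The crux is to upgrade $u$ and $v$ to eigenvectors of $A$, resp.\ $A^\t$, for the eigenvalue $\rho(A)$; for this I would assume $\rho(A)>0$. Set $W=\ker(A^k-\rho(A)^k I)$, a nonzero $A$-invariant subspace containing $u$. Since $(A|_W)^k=\rho(A)^k I_W$, the minimal polynomial of $A|_W$ divides $t^k-\rho(A)^k=\prod_{\zeta^k=1}(t-\zeta\rho(A))$, whose roots are simple because $\rho(A)\neq 0$; hence $A|_W$ is diagonalizable. Every eigenvalue of $A|_W$ is a $k$-th root of $\rho(A)^k$ lying in $\sigma(A)$, and since $\rho(A)\in\sigma(A)$ by the first step and $\lambda\mapsto\lambda^k$ is injective on $\sigma(A)$, the only candidate is $\rho(A)$ itself. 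A diagonalizable operator whose sole eigenvalue is $\rho(A)$ equals $\rho(A)I_W$, so $Au=\rho(A)u$ with $u\geq 0$, $u\neq 0$. Running the identical argument for $A^\t$ --- note $(A^\t)^k=(A^k)^\t\geq O$ and $\sigma(A^\t)=\sigma(A)$ --- produces $v^\t A=\rho(A)v^\t$.

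The only step I expect to be genuinely delicate is the degenerate case $\rho(A)=0$, where the diagonalizability trick collapses: then $A$ is nilpotent, $0=\rho(A)\in\sigma(A)$ holds trivially, but $\ker A$ need not contain a nonnegative nonzero vector --- for instance $A=\bigl[\begin{smallmatrix}1&1\\-1&-1\end{smallmatrix}\bigr]$ has $A^2=O$ while $\ker A=\spn\{(1,-1)^\t\}$. I would therefore read the eigenvector conclusion under the hypothesis $\rho(A)>0$ (equivalently, $A$ not nilpotent), which is the case of real interest; apart from this caveat, the proof is routine once Lemmas~\ref{lem:eventually1} and~\ref{lem:eventually2} are in hand.
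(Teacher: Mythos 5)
Your argument follows essentially the same route as the paper: $\rho(A)\in\sigma(A)$ from Lemma \ref{lem:eventually2}, a prime $k\geq k_0$ outside the finite set $D_A$ from Lemma \ref{lem:eventually1}, Perron--Frobenius applied to $A^k\geq O$, and a transfer of the nonnegative eigenvectors of $A^k$ back to $A$; your minimal-polynomial/diagonalizability argument on $W=\ker(A^k-\rho(A)^kI)$ just makes explicit the step the paper asserts in one line, namely that the eigenspaces of $A$ for $\rho(A)$ and of $A^k$ for $\rho(A^k)$ coincide. Your caveat about $\rho(A)=0$ is moreover well taken and is a genuine gap in the paper's own statement and proof: for a nilpotent eventually nonnegative matrix the eigenspace identification breaks down (once $A^k=O$ the eigenspace of $A^k$ is all of $\CC^n$), and your example $A=\bigl(\begin{smallmatrix}1&1\\-1&-1\end{smallmatrix}\bigr)$, which satisfies $A^2=O$ but has $\ker A$ spanned by $(1,-1)^\t$, shows that the right-eigenvector conclusion really does require $A$ not nilpotent (equivalently $\rho(A)>0$), so your restricted reading of the theorem is the correct one.
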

\begin{proof}
The fact that $\rho(A) \in \sigma(A)$ directly follows by Lemma \ref{lem:eventually2}. Due to Lemma \ref{lem:eventually1} there exists $k$ such that the $k$-powers of distinct eigenvalues in $\sigma(A)$ are distinct eigenvalues in $\sigma(A^k)$. Therefore the left and right eigenspaces of $A$ relative to $\rho(A)$ coincide with the left and right eigenspaces of $A^k$ relative to $\rho(A^k)$, respectively. Hence, since we can obviously assume  that $k\geq k_0$, the thesis follows because $A^k\geq O$ has left and right nonnegative eigenvectors relative to $\rho(A^k)$.
\end{proof}
%
\section{Consequences}\label{sec:cons-thm}
In this section we collect several relevant corollaries which more or less directly follow by our three main Theorems \ref{pow-nonneg}, \ref{serie} and \ref{pow-pos}. Some of them are known results proved previously by various authors.

\subsection{On the relation between power and eventually nonnegative matrices}
From the proof of (iv) in Theorem \ref{pow-nonneg} it follows that there exists a positive integer number $q\leq\nu(A)$ such that $k=k'q$, $\nu(A)=\nu' q$ (note that here we do not assume $q$ nontrivial), where $k$ is an integer power for which matrix $A^k$ is nonnegative and irreducible. Therefore $A^q$ is irreducible, being $A^k$ irreducible, and $\rho(A^q)$ is a simple nonzero eigenvalue of $A^q$, since $\lambda_1(A)=\rho(A)\exp( \f {2\pi \i h}{\nu(A)})=\rho(A)\exp( \f {2\pi \i s}{q})$.  Moreover its left and right eigenvectors (which thus are unique) can be chosen positive. Also note that whenever $A^m\geq O$ for an integer $m \geq \nu(A)$, $m$ has to be a multiple of $q$. Therefore the matrix $A^q$ has the same core properties that a generic nonnegative irreducible matrix has. Of course $A^q$ is a power nonnegative matrix and one could guess it to be eventually nonnegative or even nonnegative. This is in fact the case:
\begin{corollary}\label{pow-eve-nonneg}Let $A$ be power nonnegative such that $A^k$ is nonnegative and irreducible for some $k \geq \nu(A)$. If $q = \gcd(\nu(A),k)$, then $A^q$ is eventually nonnegative. 
\end{corollary}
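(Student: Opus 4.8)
The plan is to use the hypothesis only in the weak form ``$A^{\nu(A)}\ge O$ and $A^{k}\ge O$'' and to combine it with the coprimality that is built into the choice $q=\gcd(\nu(A),k)$, invoking the classical Frobenius--Sylvester (``coin problem'') description of numerical semigroups; the full machinery of Theorem~\ref{pow-nonneg} is not needed here.

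First I would set $\nu'=\nu(A)/q$ and $k'=k/q$. These are genuine positive integers because $q\mid\nu(A)$ and $q\mid k$, and the one arithmetic fact that matters is $\gcd(\nu',k')=1$, which is merely a reformulation of $q=\gcd(\nu(A),k)$. Writing $B=A^{q}$, one then has $B^{\nu'}=A^{\nu(A)}\ge O$ and $B^{k'}=A^{k}\ge O$, so $B$ is nonnegative at two coprime exponents.

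Next I would quote the Frobenius coin theorem: for coprime positive integers $\nu'$ and $k'$, every integer $j\ge(\nu'-1)(k'-1)$ can be written as $j=a\nu'+bk'$ with $a,b$ nonnegative integers. For such $j$ we get $B^{j}=(B^{\nu'})^{a}(B^{k'})^{b}$, a product of nonnegative matrices (with $M^{0}=I\ge O$), hence $B^{j}\ge O$; the degenerate cases $\nu'=1$ (then $B=A^{\nu(A)}\ge O$) and $k'=1$ (then $B=A^{k}\ge O$) are immediate. Consequently $(A^{q})^{j}=B^{j}\ge O$ for every sufficiently large $j$, which is exactly the statement that $A^{q}$ is eventually nonnegative.

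There is no genuinely hard step. The only points requiring care are the elementary bookkeeping that $\nu'$ and $k'$ are coprime, the correct invocation of the numerical-semigroup bound, and the remark that the irreducibility of $A^{k}$ plays no role in this particular conclusion: all that is used is $A^{\nu(A)}\ge O$, $A^{k}\ge O$, and the arithmetic of the two exponents after dividing out their greatest common divisor.
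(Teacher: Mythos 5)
Your proof is correct and follows essentially the same route as the paper: both factor out $q$, use coprimality of $\nu'=\nu(A)/q$ and $k'=k/q$, and invoke the Sylvester--Frobenius coin theorem to write every sufficiently large exponent as $a\nu'+bk'$, giving $(A^q)^m = A^{a\nu(A)}A^{bk}\geq O$. Your added remarks (the degenerate cases $\nu'=1$ or $k'=1$, and the observation that irreducibility of $A^k$ is not actually used) are accurate but do not change the argument.
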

\begin{proof}
Since $q=\gcd(\nu(A),k)$ there exist integers $\nu'$ and $k'$ such that  $\nu(A)=\nu^\prime q$, $k=k' q$ and $\gcd(\nu', k')=1$. Let $\F(\nu',k') = k'\nu'-k'-\nu'$ be the Frobenius number of $\nu'$ and $k'$ \cite{sylvester-numero-frobenius}. Since $\nu'$ and $k'$ are coprime then any $m > \F(\nu',k')$ can be written as $m= a \nu' + b k'$ for suitable nonnegative integers $a,b$. Thus $(A^q)^m=A^{a \nu(A)}A^{bk}\geq O$ for any $m> \F(\nu',k')$.
\end{proof}

Of course any eventually nonnegative matrix is power nonnegative, but a more noticeable relation is revealed by Corollary \ref{pow-eve-nonneg}. Note for instance that we can immediately conclude that the matrix $A$ of Example \ref{AAA} is eventually nonnegative.

Also a more strict connection between eventually positive and power positive matrices is shown here below in Corollary \ref{pow-even-pos}.

\begin{corollary}\label{pow-even-pos}
$A$ is power positive if and only if there exist integers $h,k$, such that $e^{2\pi \i h/k}A$ is eventually positive.
\end{corollary}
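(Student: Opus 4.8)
The corollary is an "if and only if" characterizing power positivity via eventual positivity of a rotated matrix. The easy direction is $(\Leftarrow)$: if $e^{2\pi\i h/k}A$ is eventually positive, then in particular $(e^{2\pi\i h/k}A)^m > O$ for some $m$, and a natural choice is to take $m$ to be a multiple of $k$, say $m = k\ell$; then $(e^{2\pi\i h/k}A)^{k\ell} = e^{2\pi\i h\ell}A^{k\ell} = A^{k\ell} > O$, so $A$ is power positive. One must only check that among the large integers $m$ for which the rotated matrix has a positive power, there is indeed a multiple of $k$ — but eventual positivity means \emph{all} large $m$ work, so this is immediate.

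\medskip

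For the harder direction $(\Rightarrow)$, suppose $A$ is power positive, so $A^{\nu(A)}$ is nonnegative and irreducible and in fact $A^{\pi(A)} > O$. By Theorem \ref{pow-pos}(i), $\lambda_1(A)$ is simple and nonzero with $|\lambda_2(A)| < |\lambda_1(A)|$, and by Theorem \ref{pow-nonneg}(i) we have $\lambda_1(A) = \rho(A)\exp(2\pi\i h/\nu(A))$ for some integer $h$ with $1 \le h \le \nu(A)$. Set $k = \nu(A)$ and consider $M = e^{-2\pi\i h/k}A$; equivalently, work with $B = \lambda_1(A)^{-1}A$, which has spectral radius achieved only at the simple eigenvalue $1$. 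The claim is that $M$ (up to the positive scalar $\rho(A)$, which does not affect eventual positivity) is eventually positive. By Theorem \ref{pow-pos}(ii), $B^p = (\lambda_1(A)^{-1}A)^p \to xy^\t/(x^\t y) > O$ as $p \to \infty$, where $x,y$ are the positive right and left Perron-type eigenvectors. Since the limit matrix is strictly positive and entrywise convergence holds, there exists $p_0$ such that $B^p > O$ for all $p \ge p_0$; hence $B$, and therefore $M = \rho(A) B$, is eventually positive. Thus $e^{-2\pi\i h/k}A$ is eventually positive with $k = \nu(A)$, which is the desired conclusion (replacing $h$ by $k-h$ if one insists on the sign in the statement).

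\medskip

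\textbf{Main obstacle.} The only real content is upgrading the \emph{convergence} $B^p \to xy^\t/(x^\t y) > O$ to the statement that \emph{all sufficiently large} powers $B^p$ are strictly positive; this is exactly the kind of step that makes point 9 follow from point 11 in Theorem \ref{pf}, and it is routine once one has the strictly positive limit from Theorem \ref{pow-pos}(ii). A secondary subtlety is bookkeeping with the argument $h/k$: one should confirm that multiplying $A$ by $e^{-2\pi\i h/\nu(A)}$ does send $\lambda_1(A)$ exactly to $\rho(A)$ (so that $B$ and $M/\rho(A)$ genuinely coincide), which is immediate from part (i) of Theorem \ref{pow-nonneg}. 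No deep new estimate is needed; the corollary is essentially a repackaging of Theorem \ref{pow-pos}.
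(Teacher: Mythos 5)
Your proof is correct and follows essentially the same route as the paper: both directions rest on Theorem \ref{pow-pos} (the rotation angle being a rational multiple of $2\pi$ from the power-nonnegativity, and $(\lambda_1(A)^{-1}A)^p$ converging to a strictly positive matrix, hence eventual positivity of the rotated matrix), and the converse is the same observation that $(e^{2\pi\i h/k}A)^{sk}=A^{sk}>O$ for large $s$. The only cosmetic difference is that you take the denominator of the rotation to be $\nu(A)$ via Theorem \ref{pow-nonneg}(i) while the paper writes it with $\pi(A)$, which changes nothing.
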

\begin{proof} Due to Theorem \ref{pow-pos}, if $A$ is power positive then $\lambda_1(A)=\rho(A)e^{\i \theta}$, $\theta=2\pi \i h / \pi(A)$. Moreover, by Theorem \ref{pow-pos} $(\lambda_1(A)^{-1}A)^k$ converges to a positive matrix (as $k$ diverges). Therefore, if $B=e^{-\i \theta}A$, then $\rho(B)=\rho(A)$ and $(\rho(A)^{-1}B)^k$ is positive for all $k$ large enough, implying the thesis. The reverse implication is obvious: since $B=e^{2\pi\i h/k }A$ is eventually positive, there exists $s$ such that $A^{sk}>O$.
\end{proof}
Observe that the same relation can not hold between power nonnegative and eventually nonnegative matrices. This is shown by the following simple example. Consider the  matrix
$$A = \matrix{cc}{ & I \\ -I &}$$
where the identity matrices are square matrices of the same order. One easily see that $\nu(A)=4$ and $A^{\nu(A)}=I$. Therefore, as already noted, $A$ can not be eventually nonnegative. Moreover $e^{i\theta}A$ is not eventually nonnegative for any $\theta \in \RR$. In fact, for any integer $p$, the nonzero entries of $(e^{\i \theta}A)^{p\nu(A)+1}$ are  $e^{\i \theta}$ and $-e^{\i \theta}$, which can not be both nonnegative numbers.

Let us recall for completeness that a converse version of Corollary \ref{pow-eve-nonneg} follows by inspecting the proof of \cite[Thm. 3.4] {mcdonald2}. We state it here below:
\begin{theorem}[\cite{mcdonald2}, Thm. 3.4]\label{teo:mcdonald} Let $A$ be a real eventually nonnegative matrix. If $A$ is nonsingular, or zero is a simple eigenvalue of $A$, then there exists $k\geq 1$ such that $A^k$ is nonnegative and irreducible.
\end{theorem}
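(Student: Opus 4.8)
The plan is to replace $A$ by a suitable large power with tractable combinatorics, analyse its Frobenius normal form, and come back to an honest power of $A$ via Theorem \ref{irr}.

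First, the degenerate cases. If $A$ is nilpotent it cannot be nonsingular, and if in addition $0$ is a simple eigenvalue then $n=1$, $A=O$, and $k=1$ already works, $1\times1$ matrices being irreducible by convention. So assume $A$ is not nilpotent; then $\rho(A)>0$, and since passing from $A$ to $cA$ with $c>0$ changes neither the sign pattern nor the irreducibility of any power, rescale so that $\rho(A)=1$. By Theorem \ref{rho-even}, $1=\rho(A)\in\sigma(A)$ with nonnegative left and right eigenvectors, and fix $k_0$ with $A^m\ge O$ for all $m\ge k_0$. Both hypotheses imply that every elementary Jordan block of $A$ relative to $0$, if any, is $1\times1$; equivalently, in the Zaslavsky--Tam splitting $A=B_A+N_A$ recalled above one has $N_A=O$. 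This property passes to every power of $A$.

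Next I would move to a convenient power. By Lemma \ref{lem:eventually1} pick a prime $k\ge k_0$ for which distinct eigenvalues of $A$ have distinct $k$-th powers, and put $B:=A^k\ge O$. Since $1\in\sigma(A)$, the unique eigenvalue of $A$ whose $k$-th power equals $\rho(B)=1$ is $1$ itself, so the algebraic multiplicity of $\rho(B)$ as an eigenvalue of $B$ equals the algebraic multiplicity of $\rho(A)$ as an eigenvalue of $A$; moreover $0$ remains a semisimple eigenvalue of $B$. Now bring $B$ to its Frobenius normal form, a block lower triangular matrix with irreducible diagonal blocks $B_{11},\dots,B_{rr}$; by Perron--Frobenius applied to the blocks, the basic ones --- those with $\rho(B_{ii})=\rho(B)=1$ --- number exactly the algebraic multiplicity of $\rho(A)$ in $A$. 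The key assertion is that this normal form is trivial, i.e.\ $r=1$: granting it, $B=A^k$ is nonnegative and irreducible, and we are done. Even if one only extracts that the basic block, or some isolated power, is irreducible, Theorem \ref{irr} promotes it to a common exponent $k'\ge k_0$ with $A^{k'}$ nonnegative and irreducible.

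The main obstacle is exactly this reduction to $r=1$: one has to exclude that $B$ carries non-basic classes above or below the basic part. Here the two hypotheses cooperate. Semisimplicity of $B$ at $0$ prevents a nilpotent ``gadget'' from dangling at the bottom of the normal form, since such a gadget would force a Jordan block of size at least $2$ at $0$ in some low power of $A$; and the irreducibility of $A$ --- which propagates to all powers, as $A$ reducible makes every $A^m$ reducible --- rules out a nontrivial $A$-invariant coordinate subspace, hence any proper block-triangular splitting that survives to a power. One caveat: the hypotheses as written do not by themselves force $A$ to be irreducible --- for instance $A=\diag(2,3)$ is real, nonsingular and eventually nonnegative yet reducible with no irreducible power --- so the statement is to be read with ``$A$ irreducible'' included, consistently with its being a converse of Corollary \ref{pow-eve-nonneg}.
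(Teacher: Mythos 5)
Your observation that the statement, read literally, needs an irreducibility hypothesis on $A$ is well taken ($A=I$ or $A=\mathrm{diag}(2,3)$ satisfies the stated hypotheses and has no irreducible power); note, however, that the paper does not prove this theorem at all --- it is quoted from \cite{mcdonald2} --- so the burden of your proposal is to supply a complete argument, and it does not. The entire content of the theorem is your ``key assertion'' that the Frobenius normal form of $B=A^{k}$ consists of a single class, i.e.\ that some nonnegative power of $A$ is irreducible, and the justification you give for it is fallacious. From ``$A$ reducible implies $A^{m}$ reducible'' you conclude that irreducibility of $A$ ``propagates to all powers''; that is the converse implication and it is false: an irreducible imprimitive nonnegative matrix has reducible powers (this is visible in the cyclic structure of point 6 of Theorem \ref{pf}), and Example \ref{AAA} of this very paper exhibits an irreducible $A$ whose power $A^{\nu(A)}$ is reducible. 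Equivalently, a coordinate subspace invariant under $A^{k}$ need not be invariant under $A$, so irreducibility of $A$ does not exclude a proper block-triangular splitting of $A^{k}$. At the decisive moment your argument therefore assumes exactly what has to be proved.

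The fallback devices do not close this hole. The diagonal blocks of the Frobenius normal form of $B$ are irreducible by construction, so ``the basic block is irreducible'' carries no information about $B$ itself; and Theorem \ref{irr} applies to a nonnegative matrix already known to be irreducible (or to have an irreducible power) --- it adjusts exponents along a subsequence, it cannot create irreducibility, which is the very point at issue. Similarly, ``semisimplicity of $B$ at $0$ prevents a nilpotent gadget'' is an intuition, not a deduction: to make it work you would have to show that a reducible normal form of $B$ forces either a second basic class (for which you would first need simplicity of $\rho(A)$, which you have not established) or a zero Jordan block of size at least $2$, and that combinatorial analysis of the classes of an eventually nonnegative matrix is precisely the nontrivial work done in \cite{mcdonald2}; it genuinely uses nonnegativity of all sufficiently large powers, whereas after fixing $B=A^{k}$ your argument only invokes a single nonnegative power together with spectral data. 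As it stands, the proposal reduces the theorem to an unproved claim equivalent to the theorem itself.
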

Note indeed that, by applying the previous theorem to $A^q$, we get: \textit{If there exists an integer $q$ such that $A^q$ is  eventually nonnegative, and if $A$ is nonnsingular or has zero as simple eigenvalue, then  there exists $k\geq q$ such that $A^k$ is nonnegative and irreducible.} 

The relation with Corollary \ref{pow-eve-nonneg} is made evident by this latter way of stating Theorem \ref{teo:mcdonald}.
\subsection{M-type matrices based on power nonnegative matrices}
Nonnegative matrices play a central role in the theory of M-matrices (see f.i. \cite{positive-book} or \cite{fiedler-special-matrices}). An M-matrix is a matrix of the form $M =  \sigma I - A$, where $A\geq O$  and $\sigma\geq \rho(A)$. One of the main properties of such matrices concern their inverse, it is known indeed that the inverse of a nonsingular M-matrix is nonnegative, whence is positive if, in addition, the M-matrix is irreducible. The same thing can not be said in the general case for a matrix of the form $\sigma I - A$, with $A$ power nonnegative. In \cite{mcdonald-M-mx} examples are shown in this direction. Nevertheless by virtue of Theorem \ref{serie} we obtain the following further result,  showing that if $|\sigma|$ is \textit{not too big} and $A$ has a nonnegative irreducible power, then the inverse of $I-\sigma^{-1}A$ has positive real part. 

To avoid ambiguities let us agree that the real and imaginary parts of a matrix $M = (m_{ij})_{ij}$ are hereafter denoted by $\Re M$ and $\Im M$, denoting the matrices $\Re(M)_{ij}=\Re(m_{ij})$ and $\Im(M)_{ij}=\Im(m_{ij})$, respectively.
\begin{theorem}\label{teo:M-mx}Let $A$ be a power nonnegative matrix such that $A^k\geq O$ is irreducible for some $k\geq \nu(A)$. There exists $\ep >0$ such that if $|\lambda_1(A)-\sigma|< \ep$ and $|\sigma|>|\lambda_1(A)|$ then $\Re ( I-\sigma^{-1}A)^{-1}$ is a positive matrix.
\end{theorem}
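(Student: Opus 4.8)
The idea is to write $(I-\sigma^{-1}A)^{-1}$ as a convergent Neumann series and split it into a dominant rank-one term, coming from the eigenvalue $\lambda_1(A)$, plus a remainder which is controlled when $\sigma$ is close to $\lambda_1(A)$. First I would set $B = \lambda_1(A)^{-1}A$ and write $\sigma = \lambda_1(A)(1+\delta)$, so that $|\sigma|>|\lambda_1(A)|$ together with $|\sigma-\lambda_1(A)|<\ep$ translates into $\delta$ small with $|1+\delta|>1$. Then
\[
(I-\sigma^{-1}A)^{-1} = \Bigl(I - \tfrac{1}{1+\delta}B\Bigr)^{-1}.
\]
Since $A^m\geq O$ is irreducible for some $m\geq\nu(A)$, Theorem~\ref{serie} tells us that the Cesàro means $\frac{1}{1+k}\sum_{s=0}^k B^s$ converge to $P := xy^\t/(x^\t y) > O$, and hence the powers $B^s$ are entrywise bounded (this is exactly the uniform bound established inside the proof of Lemma~\ref{bound}). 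That boundedness is what makes the series $\sum_{s\geq 0}(1+\delta)^{-s}B^s$ converge for every $\delta$ with $|1+\delta|>1$, so the inverse exists and equals that series.

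Next I would extract the leading behaviour. Write $B^s = P + E_s$ where, by Theorem~\ref{serie}, the Cesàro averages of $E_s$ tend to $0$; more precisely, on the invariant decomposition $\CC^n = \Span(x)\oplus(\lambda_1(A)I-A)\CC^n$ of Lemma~\ref{directsum}, $B$ acts as the identity on the first summand and with spectral radius $\leq 1$ on the second, the eigenvalue $1$ being the only one of modulus $1$ on that complement after the reduction is irrelevant — what matters is that $B-P$ restricted to $(\lambda_1(A)I-A)\CC^n$ has no eigenvalue equal to $1$. Consequently
\[
\Bigl(I-\tfrac{1}{1+\delta}B\Bigr)^{-1} = \frac{1+\delta}{\delta}\,P + R(\delta),
\]
where $\frac{1+\delta}{\delta}P = \bigl(\sum_{s\geq 0}(1+\delta)^{-s}\bigr)P$ is the contribution of the fixed subspace and $R(\delta) = (I - \frac{1}{1+\delta}B_0)^{-1}$ is the resolvent of the restriction $B_0$ of $B$ to $(\lambda_1(A)I-A)\CC^n$, extended by $0$ on $\Span(x)$. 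The key point is that $R(\delta)$ stays bounded as $\delta\to 0$: $1$ is not an eigenvalue of $B_0$, so $(I-B_0)^{-1}$ exists and $R(\delta)\to (I-B_0)^{-1}$ by continuity. Therefore $(I-\sigma^{-1}A)^{-1} = \frac{1+\delta}{\delta}P + O(1)$ as $\sigma\to\lambda_1(A)$ with $|\sigma|>|\lambda_1(A)|$.

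Finally I would read off positivity of the real part. Since $P = xy^\t/(x^\t y)$ with $x,y>0$, after choosing the normalisation $x^\t y > 0$ we have $P > O$ entrywise and real; write $P = (p_{ij})$ with $p_{ij} > 0$. The scalar prefactor is $\frac{1+\delta}{\delta}$, whose real part behaves like $\Re(1/\delta) = \Re(\bar\delta)/|\delta|^2$, which blows up in modulus as $\delta\to 0$ but whose sign is not a priori $+1$. Here is where I must use the hypothesis $|\sigma| > |\lambda_1(A)|$, i.e. $|1+\delta|>1$: expanding, $|1+\delta|^2 = 1 + 2\Re\delta + |\delta|^2 > 1$ forces $\Re\delta > -\tfrac12|\delta|^2$, so for $\delta$ small $\Re\delta$ is \emph{eventually positive relative to $|\delta|^2$}, and in fact $\Re(1/\delta) = \Re\delta/|\delta|^2 > -\tfrac12$; that lower bound is not enough on its own, so I would instead argue that for $|\sigma|>|\lambda_1(A)|$ one actually has $\Re\bigl(\tfrac{1+\delta}{\delta}\bigr) = \Re\bigl(\tfrac1\delta\bigr) + 1 = \tfrac{\Re\delta}{|\delta|^2} + 1$ and combine with $\Re\delta > -\tfrac12|\delta|^2$ to get $\Re\bigl(\tfrac{1+\delta}{\delta}\bigr) > \tfrac12 > 0$. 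Hence $\Re\bigl(\tfrac{1+\delta}{\delta}P\bigr) = \Re\bigl(\tfrac{1+\delta}{\delta}\bigr)\,P$ has all entries bounded below by $\tfrac12\min_{ij}p_{ij} > 0$, a fixed positive constant, while $\Re R(\delta)$ is bounded in absolute value by a constant independent of $\delta$. Choosing $\ep$ small enough that the bounded remainder cannot overcome this fixed positive lower bound yields $\Re(I-\sigma^{-1}A)^{-1} > O$, as claimed.

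\textbf{Main obstacle.} The delicate point is the sign/size of the scalar factor $\frac{1+\delta}{\delta}$: its modulus diverges but a careless estimate only gives $\Re(1/\delta) > -\tfrac12$, which would not dominate the bounded remainder. The real content of the hypothesis $|\sigma|>|\lambda_1(A)|$ is precisely to pin the geometry of the disk $|1+\delta|>1$ so that, for small $\delta$, $\Re\bigl(\tfrac{1+\delta}{\delta}\bigr)$ is bounded below by a positive constant; making that quantitative and matching it against a uniform bound on $\Re R(\delta)$ (which in turn rests on $1\notin\sigma(B_0)$, a consequence of Lemma~\ref{directsum} and Theorem~\ref{pow-nonneg}) is where the work lies. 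Everything else — the Neumann series, the rank-one splitting, positivity of $P$ — is routine given Theorems~\ref{pow-nonneg} and~\ref{serie}.
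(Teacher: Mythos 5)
Your reduction is sound up to the last step: with $B=\lambda_1(A)^{-1}A$ and $\sigma=\lambda_1(A)(1+\delta)$, the matrix $P=xy^\t/(x^\t y)$ is indeed the projection onto $\Span(x)$ along $(I-B)\CC^n$, one has $BP=PB=P$, the restriction $B_0$ satisfies $1\notin\sigma(B_0)$ because $\lambda_1(A)$ is simple (Theorem \ref{pow-nonneg}), and $(I-\sigma^{-1}A)^{-1}=\f{1+\delta}{\delta}P+R(\delta)$ with $R(\delta)\to(I-B_0)^{-1}(I-P)$ as $\delta\to 0$. The genuine gap is the final domination argument. Shrinking $\ep$ does not make the remainder small: $R(\delta)$ converges to the fixed, generally nonzero matrix $(I-B_0)^{-1}(I-P)$, so $\Re R(\delta)$ is of size $O(1)$ independently of $\ep$. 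On the other side your bound $\Re\(\f{1+\delta}{\delta}\)>\f{1}{2}$ is sharp: for $\sigma$ approaching $\lambda_1(A)$ tangentially to the circle $|\sigma|=|\lambda_1(A)|$ (such $\sigma$ satisfy both hypotheses for every $\ep$), $\Re\(\f{1+\delta}{\delta}\)$ stays arbitrarily close to $\f{1}{2}$ and does not diverge, so the ``main'' term contributes essentially only the fixed matrix $\f{1}{2}P$, of the same order of magnitude as the remainder. Hence ``choosing $\ep$ small enough that the bounded remainder cannot overcome this fixed positive lower bound'' is not available: neither $\f{1}{2}\min_{ij}p_{ij}$ nor the bound on $\Re R(\delta)$ depends on $\ep$. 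Already for $A=\matrix{cc}{0 & 1\\ 1 & 0}$ (which satisfies the hypotheses with $k=\nu(A)=1$) one has $(I-\sigma^{-1}A)^{-1}=\f{1+\delta}{\delta}P+\f{1+\delta}{2+\delta}(I-P)$, and the real part of the off-diagonal entries tends to $0$ along tangential sequences: positivity there survives only through the higher-order terms in $\delta$ that your estimate throws away, so no argument of the form ``large main term beats bounded remainder'' can close the proof on the whole region $|\sigma|>|\lambda_1(A)|$; it would work only in a Stolz-type nontangential sector.

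This is also where your route departs from the paper's. The paper never isolates the singular part of the resolvent; it works with the partial sums $M_k=\sum_{m=0}^k\sigma^{-m}A^m$: by Theorem \ref{serie}, at $\sigma=\lambda_1(A)$ these have real part growing like $(k+1)\,xy^\t/(x^\t y)$, hence positive for $k$ large; a continuity argument in $\sigma-\lambda_1(A)$ keeps $\Re M_k$ positive for $|\sigma-\lambda_1(A)|$ small, and since $|\sigma|>|\lambda_1(A)|$ the sums converge to $(I-\sigma^{-1}A)^{-1}$, which gives the statement in the limit. If you want to keep your spectral splitting, you would have to prove the nontrivial entrywise inequality $\f{1}{2}P+\Re\[(I-B_0)^{-1}(I-P)\]\geq O$ and then treat the entries where equality occurs by expanding to the next order in $\delta$; that is precisely the work your plan labels as routine.
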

\begin{proof}
For ease of notation let $\lambda_1 = \lambda_1(A)$. Theorem \ref{serie} shows that for any $k$ large enough 
$$\sum_{m=0}^k \lambda_1^{-m}A^m$$
has positive real part. Now let $\phi$ be such that $\sigma - \lambda_1 = \ep_0 e^{\i \phi}$ and consider the matrix
$$ M_k = \sum_{m=0}^k \sigma^{-m}A^m $$
Since the entries of such matrix depend continuously on $\ep_0$, a standard  continuity argument shows that there exists a positive $\ep$ such that if $\ep_0 \in (0,\ep)$ then $\Re M_k$ is positive, for any $k$ large enough. Finally since $|\sigma|>|\lambda_1|$, then  $( I - \sigma^{-1}A)^{-1}=\lim_{k\to \infty}M_k$ and we conclude that $\Re ( I-\sigma^{-1}A)^{-1}>O$.
\end{proof}
We would point out that, combining \cite[Thm. 3.4]{mcdonald2}, Theorems \ref{rho-even} and \ref{teo:M-mx} above, it can be easily observed that 
\begin{corollary}[\cite{mcdonald-M-mx}, Thm. 4.2]
If $A$ is a real eventually nonnegative matrix such that either $A$ is nonnsingular or zero is a simple eigenvalue of $A$, then there exits $\lambda>\rho(A)$ such that if $\lambda>\sigma>\rho(A)$ then $(\sigma I -A)^{-1}>O$.
\end{corollary}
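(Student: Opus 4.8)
The plan is to stitch together three of the results already established: Theorem~\ref{teo:mcdonald}, Theorem~\ref{rho-even} and Theorem~\ref{teo:M-mx}. First I would invoke Theorem~\ref{teo:mcdonald}: since $A$ is real eventually nonnegative and either nonsingular or with $0$ a simple eigenvalue, there is an integer $k\geq 1$ with $A^k$ nonnegative and irreducible. In particular $A$ is power nonnegative, and because $A^k\geq O$ we automatically have $\nu(A)\leq k$, so $k\geq\nu(A)$ and the hypotheses of Theorem~\ref{teo:M-mx} are met.

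Next I would pin down the dominant eigenvalue. As $A$ is eventually nonnegative, Theorem~\ref{rho-even} (equivalently Lemma~\ref{lem:eventually2}) gives $\rho(A)\in\sigma(A)$; choosing the enumeration of the eigenvalues compatibly with the convention $|\lambda_1(A)|=\rho(A)$ we may take $\lambda_1(A)=\rho(A)$, a nonnegative real number. The degenerate case $\rho(A)=0$ forces $n=1$ and $A=O$, for which the statement is trivial, so we may assume $\rho(A)>0$.

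Then I would apply Theorem~\ref{teo:M-mx}: it produces $\ep>0$ such that $\Re(I-\sigma^{-1}A)^{-1}>O$ whenever $|\sigma-\lambda_1(A)|<\ep$ and $|\sigma|>|\lambda_1(A)|$. Set $\lambda=\rho(A)+\ep$. For a \emph{real} $\sigma$ with $\rho(A)<\sigma<\lambda$ one has $|\sigma-\lambda_1(A)|=\sigma-\rho(A)<\ep$ and $|\sigma|=\sigma>\rho(A)=|\lambda_1(A)|$, hence $\Re(I-\sigma^{-1}A)^{-1}>O$. But $A$ is real and $\sigma$ is real, so $\sigma I-A$ is a real matrix, invertible since $\sigma>\rho(A)$, and therefore so is its inverse; consequently $\Re(I-\sigma^{-1}A)^{-1}=(I-\sigma^{-1}A)^{-1}$. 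Since $(\sigma I-A)^{-1}=\sigma^{-1}(I-\sigma^{-1}A)^{-1}$ and $\sigma>0$, we conclude $(\sigma I-A)^{-1}>O$, as required.

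The argument has no real obstacle — it is a genuine corollary of the machinery already in place — and the only two points requiring a little care are the identification $\lambda_1(A)=\rho(A)$, which is what guarantees that the disc $|\sigma-\lambda_1(A)|<\ep$ furnished by Theorem~\ref{teo:M-mx} actually meets the real ray $\sigma>\rho(A)$, and the observation that the reality of $A$ and of $\sigma$ kills the imaginary part, upgrading the conclusion of Theorem~\ref{teo:M-mx} about $\Re(\,\cdot\,)$ to a statement about the inverse matrix itself.
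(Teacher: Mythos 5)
Your argument is correct and is exactly the route the paper intends: the corollary is presented there as the combination of Theorem~\ref{teo:mcdonald}, Theorem~\ref{rho-even} and Theorem~\ref{teo:M-mx}, which is precisely what you carry out, filling in the details (the identification $\lambda_1(A)=\rho(A)$ via Theorem~\ref{pow-nonneg}(ii)/\ref{rho-even}, the trivial nilpotent case, and the passage from $\Re(I-\sigma^{-1}A)^{-1}>O$ to $(\sigma I-A)^{-1}>O$ using that $A$ and $\sigma$ are real). Nothing further is needed.
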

\subsection{The case of real matrices}
It immediately follows from point (ii) of Theorem \ref{pow-pos} that if $A$ is power positive, then
\begin{itemize}
\item $\exists k_0$ such that $\Re(\lambda_1(A)^{-k}A^k)>O$ for any $k\geq k_0$
\item $\forall \ep>0$ $\exists k_\ep$ such that $\|\Im(\lambda_1(A)^{-k}A^{k})\|<\ep$, $\forall k \geq k_\ep$
\end{itemize}  
It is clear therefore that when $A$ is real, also a converse direction of points (i) and (ii) in Theorem \ref{pow-pos} holds. We state this fact in detail in next Corollary \ref{known}, whose proof  may be easily derived  as a special case of our Theorem \ref{pow-pos}.  We want to point out that Corollary \ref{known} is a well known result, see for instance \cite[Lemma 2.1 and Thm. 2.2]{handelman} and \cite{seneta-introduction-nonnegative}.
\begin{corollary}\label{known}
If $A$ is a real power positive matrix, the following statements are equivalent:
\begin{itemize}
\item There exists  an odd integer $k \geq \pi(A)$ such that $A^k >O$
\item $\rho(A) \in \sigma(A)$, it dominates the modulus of the other eigenvalues, the right and left eigenvectors $x$, $y$ relative to $\rho(A)$ are unique and  can be chosen positive.
\end{itemize}
\end{corollary}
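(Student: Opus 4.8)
The plan is to read off both implications directly from Theorem~\ref{pow-pos}, using the hypothesis that $A$ is real (and nothing more delicate).

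\textbf{Forward implication.} Suppose $A^k>O$ for some odd $k\geq\pi(A)$. Since $A$ is power positive, Theorem~\ref{pow-pos}(i) gives that $\lambda_1(A)$ is simple, nonzero, and satisfies $|\lambda_2(A)|<|\lambda_1(A)|$; in particular $\lambda_1(A)$ is the \emph{only} eigenvalue of $A$ of modulus $\rho(A)$. Because $A$ is real, its non-real eigenvalues occur in conjugate pairs with equal moduli, so a unique dominant eigenvalue is forced to be real; and since $\lambda_1(A)^k$ is an eigenvalue of the positive matrix $A^k$ of modulus $\rho(A^k)=\rho(A)^k$, Perron--Frobenius applied to $A^k>O$ forces $\lambda_1(A)^k=\rho(A)^k>0$, whence, $k$ being odd and $t\mapsto t^k$ injective on $(0,\infty)$, we get $\lambda_1(A)=\rho(A)\in\sigma(A)$. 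The domination claim is then exactly $|\lambda_2(A)|<\rho(A)$. For the eigenvectors: if $w>0$ is the Perron right eigenvector of $A^k$ for $\rho(A)^k$, then $A^k(Aw)=\rho(A)^k(Aw)$ and simplicity of $\rho(A)^k$ force $Aw\in\Span(w)$, so $w$ is (up to a scalar) a right eigenvector of $A$ for $\lambda_1(A)=\rho(A)$; running the same argument on $A^\t$ handles the left eigenvector, and simplicity of $\lambda_1(A)$ yields uniqueness up to scaling.

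\textbf{Reverse implication.} Suppose $\rho(A)\in\sigma(A)$ with the stated strict dominance and positive, unique eigenvectors. Strict dominance together with $\rho(A)=|\lambda_1(A)|$ forces $\lambda_1(A)=\rho(A)$, and power positivity gives $\rho(A)>0$. Theorem~\ref{pow-pos}(ii) then yields $(\rho(A)^{-1}A)^p\to xy^\t/(x^\t y)>O$ as $p\to\infty$, where $x,y>0$ are positive eigenvectors of $A$ for $\rho(A)$; multiplying by $\rho(A)^p>0$ shows $A^p>O$ for all sufficiently large $p$. Any odd integer $k\geq\pi(A)$ in that range (there are infinitely many) gives $A^k>O$, as required.

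\textbf{Main obstacle.} The only point needing care is in the forward implication: one must be sure that "$A$ real" plus "$k$ odd" genuinely pins down $\lambda_1(A)=\rho(A)$, not merely $|\lambda_1(A)|=\rho(A)$, and that the Perron vector of $A^k$ actually descends to an eigenvector of $A$. Both follow from the simplicity assertion in Theorem~\ref{pow-pos}(i) (which rules out a conjugate pair of dominant eigenvalues and makes the eigenspace of $\rho(A)^k$ one-dimensional), so no new estimate is required; everything else is a direct specialization of Theorem~\ref{pow-pos} to real matrices.
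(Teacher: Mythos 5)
Your proposal is correct and follows the route the paper itself indicates: the paper omits a detailed proof, stating that Corollary~\ref{known} "may be easily derived as a special case of our Theorem~\ref{pow-pos}", and your argument is precisely that derivation — Theorem~\ref{pow-pos}(i) plus realness forces $\lambda_1(A)$ real, Perron--Frobenius on $A^k>O$ with $k$ odd pins $\lambda_1(A)=\rho(A)$ and lets the Perron vector of $A^k$ descend to $A$, while the converse uses Theorem~\ref{pow-pos}(ii) to get $A^p>O$ for all large $p$, hence for some odd $k\geq\pi(A)$. The only cosmetic slip is the phrase "injective on $(0,\infty)$": what you actually use is that $t\mapsto t^k$ is injective on all of $\RR$ for odd $k$ (equivalently $(-\rho(A))^k<0\neq\rho(A)^k$), which is exactly what your argument supplies.
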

As a simple consequence of Corollary  \ref{pow-even-pos} we deduce Thm. 3 in \cite{brauer} and Thm. 1 in \cite{johnson-tarazaga}, herebelow stated as Corollaries \ref{x} and \ref{y}. Their proof is omitted.
\begin{corollary}[\cite{brauer}, Thm. 3]\label{x} A real matrix $A$ is power positive if and only if $A$ or $-A$ are  eventually positive. 
\end{corollary}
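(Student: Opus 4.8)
The plan is to derive Corollary \ref{x} as the real specialization of Corollary \ref{pow-even-pos}, using the spectral dichotomy of Theorem \ref{pow-pos}. I would dispatch the easy direction first: if $A$ or $-A$ is eventually positive, then some power $A^{k}>O$ or some power $(-A)^{k}>O$, and in the second case $A^{2k}=(-A)^{2k}>O$ as well; in either case $A$ is power positive. So all the work is in the forward implication.

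For the forward implication, assume $A$ real and power positive. First I would record that, by Theorem \ref{pow-pos}(i), the maximal-modulus eigenvalue $\lambda_1(A)$ is simple and strictly dominates, $|\lambda_2(A)|<|\lambda_1(A)|$. The key observation is that $\lambda_1(A)$ must then be real: a real matrix has its non-real spectrum arranged in complex conjugate pairs of equal modulus, so a non-real $\lambda_1(A)$ would come with a distinct eigenvalue $\overline{\lambda_1(A)}$ of the same modulus $\rho(A)$, contradicting strict dominance. Hence $\lambda_1(A)=\epsilon\rho(A)$ for some $\epsilon\in\{+1,-1\}$.

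Next I would pass to $B=\epsilon A$. Since $2\pi(A)$ is even, $(\epsilon A)^{2\pi(A)}=A^{2\pi(A)}=(A^{\pi(A)})^{2}>O$, so $B$ is real and power positive; moreover its unique maximal-modulus eigenvalue is $\epsilon\lambda_1(A)=\rho(A)=\rho(B)$, i.e. $\lambda_1(B)=\rho(B)$. Applying Theorem \ref{pow-pos}(ii) to $B$ gives $(\rho(B)^{-1}B)^{p}\to xy^{\t}/(x^{\t}y)>O$, hence $B^{p}>O$ for all large $p$; that is, $B=\epsilon A$ is eventually positive. For $\epsilon=1$ this says $A$ is eventually positive, and for $\epsilon=-1$ that $-A$ is, which is the claim.

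I expect the only delicate point to be the reality of $\lambda_1(A)$, which is precisely where the strict spectral gap of Theorem \ref{pow-pos}(i) is essential — without it one could not exclude a conjugate pair sitting on the spectral circle. Everything after that is a routine application of the convergence statement in Theorem \ref{pow-pos}(ii) to $\pm A$. One could equally well argue directly from Corollary \ref{pow-even-pos}: it produces integers $h,k$ with $e^{2\pi\i h/k}A$ eventually positive, and the reality of $A$ together with the same conjugate-pair argument forces the relevant scalar multiplier to be $\pm1$ in the limit, giving the same dichotomy.
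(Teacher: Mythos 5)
Your proof is correct. Note that the paper itself omits the proof of this corollary, saying only that it is ``a simple consequence of Corollary \ref{pow-even-pos}''; what you have written is essentially the real specialization of the machinery behind that corollary, carried out directly from Theorem \ref{pow-pos}. The backward direction is handled exactly as one would expect (passing to an even power to absorb the sign), and your forward direction supplies precisely the step the paper leaves implicit: since $A$ is real, a non-real $\lambda_1(A)$ would be accompanied by the distinct eigenvalue $\overline{\lambda_1(A)}$ of the same modulus, contradicting the strict gap $|\lambda_2(A)|<|\lambda_1(A)|$ of Theorem \ref{pow-pos}(i); hence $\lambda_1(A)=\pm\rho(A)$ with $\rho(A)>0$ (the ``nonzero'' part of (i) is what lets you rescale), and applying Theorem \ref{pow-pos}(ii) to $B=\pm A$ --- which is power positive via the even power $A^{2\pi(A)}=(A^{\pi(A)})^2>O$ --- gives $(\rho(B)^{-1}B)^p>O$ for all large $p$, i.e.\ $B$ eventually positive. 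Compared with quoting Corollary \ref{pow-even-pos} (your alternative sketch), the direct route has the advantage of making explicit why the unimodular factor $e^{2\pi\i h/k}$ degenerates to $\pm1$ in the real case, which is the only genuinely real-specific point of the statement; the two routes are otherwise the same argument, since the proof of Corollary \ref{pow-even-pos} itself rests on Theorem \ref{pow-pos}(i) and (ii).
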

\begin{corollary}[\cite{johnson-tarazaga}, Thm. 1]\label{y}
A real matrix $A$ is power positive and $A^k>O$ for an odd integer $k\geq \pi(A)$ if and only if $A$ is eventually positive.
\end{corollary}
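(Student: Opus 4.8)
The plan is to read both implications off Theorem~\ref{pow-pos}, using in the nontrivial direction the conjugation symmetry of the spectrum of a real matrix.

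The reverse implication is immediate: if $A$ is real and eventually positive there is $k_0$ with $A^k>O$ for every $k\geq k_0$, so $A$ is power positive, $\pi(A)\leq k_0$, and any odd $k\geq k_0$ is an odd integer $\geq\pi(A)$ with $A^k>O$.

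For the forward implication, assume $A$ real, power positive, with $A^k>O$ for some odd $k\geq\pi(A)$. The crux is to show $\lambda_1(A)=\rho(A)$. By Theorem~\ref{pow-pos}(i), $\lambda_1(A)$ is simple and $|\lambda_2(A)|<|\lambda_1(A)|$, so $\lambda_1(A)$ is the unique eigenvalue of $A$ of modulus $\rho(A)$. Since $A^k>O$, the matrix $A^k$ is primitive, and Theorem~\ref{pf} says $\rho(A)^k=\rho(A^k)$ is the only eigenvalue of $A^k$ of modulus $\rho(A^k)$; as $\lambda_1(A)^k\in\sigma(A^k)$ has modulus $\rho(A)^k$, we get $\lambda_1(A)^k=\rho(A)^k$. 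Writing $\lambda_1(A)=\rho(A)e^{\i\phi}$, the reality of $A$ forces $\overline{\lambda_1(A)}=\rho(A)e^{-\i\phi}$ to be an eigenvalue of modulus $\rho(A)$ as well, hence $e^{-\i\phi}=e^{\i\phi}$ and $\phi\in\{0,\pi\}$; but $\phi=\pi$ is impossible, since then $\lambda_1(A)^k=(-1)^k\rho(A)^k=-\rho(A)^k<0$ ($k$ odd), contradicting $\lambda_1(A)^k=\rho(A)^k>0$. Thus $\lambda_1(A)=\rho(A)$, and Theorem~\ref{pow-pos}(ii) gives $(\rho(A)^{-1}A)^p\to xy^\t/(x^\t y)>O$ as $p\to\infty$, so $A^p>O$ for all large $p$, i.e. $A$ is eventually positive.

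The one delicate point, establishing $\lambda_1(A)=\rho(A)$, genuinely needs both hypotheses: reality (to force, via conjugation symmetry and the simplicity of $\lambda_1(A)$, that $\lambda_1(A)=\pm\rho(A)$) and the parity of $k$ (to discard $\lambda_1(A)=-\rho(A)$, which for even $k$ would be perfectly consistent with $\lambda_1(A)^k=\rho(A)^k$). Once that is in hand the rest is mechanical. An alternative route is through Corollary~\ref{pow-even-pos}: power positivity yields $\theta$ with $e^{-\i\theta}A$ eventually positive and $\lambda_1(A)=\rho(A)e^{\i\theta}$, and the same two hypotheses then force $e^{\i\theta}=1$, giving the claim directly.
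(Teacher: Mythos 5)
Your proof is correct: the parity-plus-reality argument (reality and simplicity of $\lambda_1(A)$ force $\lambda_1(A)=\pm\rho(A)$, oddness of $k$ with $\lambda_1(A)^k=\rho(A)^k>0$ discards the minus sign, and then Theorem \ref{pow-pos}(ii) yields eventual positivity) is sound, as is the immediate reverse direction. The paper omits the proof entirely, stating only that the corollary is a simple consequence of Corollary \ref{pow-even-pos}; your direct route through Theorem \ref{pow-pos} supplies exactly the missing details, and the alternative you sketch via Corollary \ref{pow-even-pos} matches the paper's intended derivation.
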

Finally note that a bit more than what is claimed in Corollary \ref{y} can be said. To our knowledge this latter consequence, proved in Corollary \ref{cor:odd-irrid} below, has not been observed before.
\begin{corollary}\label{cor:odd-irrid} If $A$ is a real power nonnegative matrix such that $A^k$ is nonnegative and irreducible for an odd integer $k\geq \nu(A)$, then $\rho(A) \in \sigma(A)$.
\end{corollary}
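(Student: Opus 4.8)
The plan is to combine the structural description of $\lambda_1(A)$ supplied by Theorem \ref{pow-nonneg} with the two extra features of the present hypotheses: that $A$ is \emph{real} and that the nonnegative irreducible power has \emph{odd} exponent. I would take the given odd integer $k\geq\nu(A)$ to be the ``$k$'' of Theorem \ref{pow-nonneg}, so that parts (i)--(iii) apply: there is an $h$ with $\lambda_1(A)=\rho(A)\exp(2\pi\i h/\nu(A))$, the eigenvalue $\lambda_1(A)$ is simple and nonzero (hence $\rho(A)>0$), there is a positive right eigenvector $x>0$ with $Ax=\lambda_1(A)x$, and $\rho(A)\in\sigma(A)$ if and only if $\lambda_1(A)=\rho(A)$. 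I would first record that $\lambda_1(A)^k=\rho(A)^k$: indeed $A^kx=\lambda_1(A)^kx$ with $A^k\geq O$ and $x>0$ forces $\lambda_1(A)^k\geq 0$, and since $|\lambda_1(A)^k|=\rho(A)^k>0$ the only possibility is $\lambda_1(A)^k=\rho(A)^k$. I would also recall, from inside the proof of Theorem \ref{pow-nonneg}(ii), that $\lambda_1(A)$ is the \emph{unique} eigenvalue of $A$ whose $k$-th power equals $\rho(A)^k$.

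The reality of $A$ is then used exactly once. Since $\sigma(A)$ is closed under complex conjugation, $\overline{\lambda_1(A)}\in\sigma(A)$, and $\overline{\lambda_1(A)}^{\,k}=\overline{\lambda_1(A)^k}=\overline{\rho(A)^k}=\rho(A)^k$ because $\rho(A)^k$ is a nonnegative real. By the uniqueness just recalled, $\overline{\lambda_1(A)}=\lambda_1(A)$, so $\lambda_1(A)$ is real and therefore $\lambda_1(A)\in\{\rho(A),-\rho(A)\}$ with $\rho(A)>0$. The oddness of $k$ now eliminates the unwanted value: if $\lambda_1(A)=-\rho(A)$ then $\lambda_1(A)^k=(-\rho(A))^k=-\rho(A)^k\neq\rho(A)^k$, contradicting $\lambda_1(A)^k=\rho(A)^k$. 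Hence $\lambda_1(A)=\rho(A)$, and by Theorem \ref{pow-nonneg}(ii) we conclude $\rho(A)\in\sigma(A)$.

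I do not expect a genuine obstacle. The only delicate point is to quote correctly the uniqueness statement---``$\lambda_1(A)$ is the only eigenvalue of $A$ with $\lambda^k=\rho(A)^k$''---which appears in the proof of Theorem \ref{pow-nonneg}(ii) rather than in its statement; this is precisely where reality of $A$ is invested. The hypothesis ``$k$ odd'' is spent only in the final line, to discard $\lambda_1(A)=-\rho(A)$, and without it the conclusion genuinely fails, e.g.\ for $A=-M$ with $M$ a positive matrix, for which $A^2=M^2>O$ is irreducible with $\nu(A)=2$ yet $\rho(A)=\rho(M)\notin\sigma(A)$.
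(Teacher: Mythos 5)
Your proof is correct and follows essentially the same route as the paper: both arguments use reality of $A$ to bring in $\overline{\lambda_1(A)}$, the simplicity of $\rho(A^k)$ for the irreducible nonnegative power (equivalently, the uniqueness of the eigenvalue with $\lambda^k=\rho(A)^k$ from the proof of Theorem \ref{pow-nonneg}(ii)) to force $\lambda_1(A)\in\{\pm\rho(A)\}$, and the oddness of $k$ to discard $-\rho(A)$. Your explicit verification that $\lambda_1(A)^k=\rho(A)^k$ via the positive eigenvector is a small expository addition, not a different method.
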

\begin{proof}
Since $A$ is real, both $\lambda_1(A)$ and $\bar{\lambda_1(A)}$ (its complex conjugate) belongs to $\sigma(A)$. If $\lambda_1(A)\neq \bar{\lambda_1(A)}$ then the multiplicity of $\rho(A^k)$  as an eigenvalue of $A^k$ is larger or equal than two. But this is not possible since $A^k\geq O$ is irreducible. Hence $\lambda_1(A)=\pm \rho(A)$. Finally the oddness of $k$ implies $\lambda_1(A)=\rho(A)$.
\end{proof}

\subsection{The case of weakly stochastic matrices}
We recall that, in our notation, a weakly stochastic matrix $A$ is a matrix such that $\sum_i a_{ij}=1$, for any $j=1,\dots,n$ (we do not require the nonnegativity). Hence $1$ is always an element of the spectrum of a weakly stochastic matrix. This fact let us  state the following 
\begin{corollary}
 Let $A$ be weakly stochastic and power nonnegative. Then $1 = \rho(A)\in \sigma(A)$. 
 Moreover if $A^{m}$ is nonnegative and irreducible for some $m \geq \nu(A)$, then properties (ii)-(vii) of Theorem \ref{pow-nonneg} hold for $y=e$.
\end{corollary}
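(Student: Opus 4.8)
The plan is to pin down the spectral radius first, and then simply read off properties (ii)--(vii) from Theorem \ref{pow-nonneg} with the explicit left eigenvector $e$. Since $A$ is weakly stochastic, by definition $A^\t e = e$, i.e. $e^\t A = e^\t$, so $1 \in \sigma(A)$ and $e > 0$ is a left eigenvector of $A$ for the eigenvalue $1$. To see that in fact $\rho(A) = 1$, I would first note that $A^{\nu(A)}$ is again weakly stochastic, because $(A^{\nu(A)})^\t e = (A^\t)^{\nu(A)} e = e$, and that it is nonnegative by the very definition of $\nu(A)$; hence $A^{\nu(A)}$ is an honest column-stochastic matrix. For such a matrix the spectral radius is exactly $1$ (it is $\leq 1$ since the maximal column sum is $1$, and $\geq 1$ since $1$ is already an eigenvalue). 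Using $\rho(A^{\nu(A)}) = \rho(A)^{\nu(A)}$ this forces $\rho(A) = 1$, and together with $1 \in \sigma(A)$ we conclude $1 = \rho(A) \in \sigma(A)$.

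For the second part, assume $A^m \geq O$ is irreducible for some $m \geq \nu(A)$. This is precisely the hypothesis needed to apply parts (ii)--(vii) of Theorem \ref{pow-nonneg}. By (ii), since $\rho(A) \in \sigma(A)$, we must have $\lambda_1(A) = \rho(A) = 1$; by (iii) the left eigenvector of $A$ associated with $\lambda_1(A)$ is positive and unique up to a scalar multiple, and since $e^\t A = e^\t = \lambda_1(A) e^\t$ with $e > 0$, the choice $y = e$ is admissible. Substituting $y = e$ into the statements of Theorem \ref{pow-nonneg} then gives (ii)--(vii) verbatim; in particular, in (v) one has $D_y = \diag(e) = I$ and $\lambda_1(A)^{-1} = 1$, so the weakly column stochastic matrix $C$ constructed there is just $A$ itself, consistently with the hypothesis that $A$ is already weakly (column) stochastic.

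There is essentially no obstacle here: the only point requiring a moment's care is the observation that $A^{\nu(A)}$ is a genuine stochastic matrix, so that $\rho(A^{\nu(A)}) = 1$, and that this descends to $\rho(A) = 1$ via $\rho(A^{k}) = \rho(A)^{k}$. Everything else is a direct invocation of Theorem \ref{pow-nonneg} with the distinguished positive left eigenvector $e$.
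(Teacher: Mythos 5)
Your proposal is correct and follows essentially the paper's own route: both arguments rest on the observation that $A^{\nu(A)}$ is nonnegative and weakly column stochastic, hence a genuine stochastic matrix with unit $1$-norm, which bounds $\rho(A)$ by $1$, and then combine this with $1\in\sigma(A)$ (from $A^\t e=e$) to get $1=\rho(A)\in\sigma(A)$; the second part is, as you say, a direct invocation of Theorem \ref{pow-nonneg} with $y=e$.
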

\begin{proof}
 We only need to prove that $1=\rho(A)$. It is easy to observe that $A$ weakly stochastic implies $A^k$ weakly stochastic for any integer $k$. Let $\lambda \in \sigma(A)$, then
$$\textstyle{|\lambda|^{\nu(A)}=|\lambda^{\nu(A)}|\leq\rho(A^{\nu(A)})\leq \|A^{\nu(A)}\|_1 = \max_j \sum_i |(A^{\nu(A)})_{ij}|=1}$$
and as a consequence $\rho(A)\leq 1$. We conclude since $1 \in \sigma(A)$.
\end{proof}
By inspecting the proof of Theorem \ref{pow-nonneg} one notes that property (v) in it actually only requires that $A$ has a positive dominant eigenvector and that $\lambda_1(A)$ is nonzero. This fact is  stated by the next Corollary \ref{stoch-simil} where the nilpotent case is considered as well.

Observe that if $\rho(A)=0$, then $A$ is nilpotent and thus $A$ is power nonnegative. Moreover, obviously, if an integer $p$ is such that $A^p=O$ then $p \geq \nu(A)$. However, under the assumption that $A$ has a positive dominant eigenvector we see that $p=\nu(A)$. Namely, we have the following
\begin{lemma}\label{lem:nilpot}Let $A$ be power nonnegative such that $\rho(A)=\lambda_1(A)=0$ and such that $A$ has a positive dominant eigenvector (i.e. a positive eigenvector relative to zero). Then $A^{\nu(A)}=O$.
\end{lemma}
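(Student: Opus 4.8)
Set $k=\nu(A)$, so that by definition $A^{k}\geq O$. The plan is to exploit the positive eigenvector together with the sign condition on $A^{k}$ to force every entry of $A^{k}$ to vanish. First I would record that, since $x>0$ is an eigenvector of $A$ relative to the eigenvalue $0$, we have $Ax=0$ and hence $A^{k}x=A^{k-1}(Ax)=0$. (Here $k\ge 1$, and if $k=1$ then already $A=A^{\nu(A)}\ge O$ with $Ax=0$, which is the same situation.)

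Next I would write this componentwise: for each index $i$,
\[
0=(A^{k}x)_i=\sum_{j=1}^{n}(A^{k})_{ij}\,x_j .
\]
Since $A^{k}\geq O$ we have $(A^{k})_{ij}\geq 0$ for all $i,j$, and since $x>0$ each summand $(A^{k})_{ij}x_j$ is nonnegative. A sum of nonnegative reals that equals zero must have every term equal to zero, so $(A^{k})_{ij}x_j=0$; dividing by $x_j>0$ gives $(A^{k})_{ij}=0$. As $i,j$ were arbitrary, $A^{k}=O$, i.e. $A^{\nu(A)}=O$.

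There is essentially no obstacle here: the only thing to be careful about is that $\rho(A)=0$ indeed guarantees $A$ is nilpotent (so that $A$ is automatically power nonnegative and $\nu(A)$ is well defined), but this is immediate from the definition of the spectral radius, and it is anyway granted by hypothesis that $A$ is power nonnegative with a positive null vector. The substance is the one-line ``nonnegative entries annihilating a strictly positive vector must be zero'' argument, which is exactly the mechanism already used in Lemma \ref{bound}.
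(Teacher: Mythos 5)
Your proof is correct and follows essentially the same route as the paper: both use $Ax=0\Rightarrow A^{\nu(A)}x=0$ and then the fact that a nonnegative matrix annihilating a strictly positive vector must be zero (the paper phrases it as a contradiction on a hypothetical positive entry, you argue directly term by term, which is the same mechanism).
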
 
\begin{proof}
Since $Ax=0$ for a positive vector $x$, then $A^{\nu(A)}x=0$. If indices $i,j$ exist such that $(A^{\nu(A)})_{ij}>0$ then  $(A^{\nu(A)}x)_i = \sum_{k}(A^{\nu(A)})_{ik}x_k\geq (A^{\nu(A)})_{ij}x_j>0$ which yields an absurd.
\end{proof}
\begin{corollary}\label{stoch-simil}
Consider a square matrix $A$. 
\begin{itemize}
\item If $\rho(A)>0$, then $A$ is power nonnegative, and there exists a positive dominant eigenvector of $A$ if and only if $\lambda_1(A)^{-1}A$ is similar to a weakly row stochastic power nonnegative matrix $S$ via positive definite diagonal similarity transform, and $\nu(S)= \nu(A)$.
\item If $\rho(A)=0$ then $A$ is power nonnegative and $A^{\nu(A)}=O$. Moreover there exists a positive dominant eigenvector of $A$ if and only if $A$ is similar via a positive definite diagonal similarity transform to a nilpotent matrix $L$ whose rows sum up to zero.
\end{itemize}  
\end{corollary}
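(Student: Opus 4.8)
The plan is to handle $\rho(A)>0$ and $\rho(A)=0$ separately, and in each case to read the required similarity off directly from a positive dominant eigenvector, mirroring the argument behind Theorem~\ref{pow-nonneg}(v) (which is where all the work on exponents was already done) and invoking Lemma~\ref{lem:nilpot} for the nilpotent case.

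Suppose first $\rho(A)>0$, so that $\lambda_1(A)\neq0$. For the forward implication, take a power nonnegative $A$ with a positive eigenvector $x>0$ relative to $\lambda_1(A)$, put $D_x=\diag(x)$ and $S=\lambda_1(A)^{-1}D_x^{-1}AD_x$. Then $S$ is similar to $\lambda_1(A)^{-1}A$ through the positive definite diagonal $D_x$, shares the pattern of $A$, and $Se=\lambda_1(A)^{-1}D_x^{-1}Ax=e$, so $S$ is weakly row stochastic. Feeding $x>0$ into $A^{\nu(A)}x=\lambda_1(A)^{\nu(A)}x$ together with $A^{\nu(A)}\geq O$ forces $\lambda_1(A)^{\nu(A)}$ to be a nonnegative real, hence $\lambda_1(A)^{\nu(A)}=\rho(A)^{\nu(A)}>0$; consequently $S^{\nu(A)}=\rho(A)^{-\nu(A)}D_x^{-1}A^{\nu(A)}D_x\geq O$ and $\nu(S)\leq\nu(A)$, while running the identity $A^{\nu(S)}=\lambda_1(A)^{\nu(S)}D_xS^{\nu(S)}D_x^{-1}$ in the other direction gives $\nu(A)\leq\nu(S)$, so $\nu(S)=\nu(A)$ as in Theorem~\ref{pow-nonneg}(v). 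For the converse, if $\lambda_1(A)^{-1}A$ is similar to a weakly row stochastic power nonnegative $S$ through a positive definite diagonal $D$, then $A=\lambda_1(A)DSD^{-1}$ and $A(De)=\lambda_1(A)DSe=\lambda_1(A)(De)$ with $De>0$, so $De$ is the desired positive dominant eigenvector.

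Suppose now $\rho(A)=0$. Then $A$ is nilpotent, so $A^p=O\geq O$ for its nilpotency index $p$ and $A$ is power nonnegative; if in addition $A$ has a positive dominant eigenvector, Lemma~\ref{lem:nilpot} yields $A^{\nu(A)}=O$. Since here $\lambda_1(A)=\rho(A)=0$, a positive dominant eigenvector $x>0$ satisfies $Ax=0$; with $D_x=\diag(x)$ and $L=D_x^{-1}AD_x$ one gets a matrix similar to $A$ via a positive definite diagonal transform, nilpotent because $A$ is, with $Le=D_x^{-1}Ax=0$, i.e. with rows summing to zero. For the converse, writing $A=DLD^{-1}$ with $L$ nilpotent and $Le=0$ gives $A(De)=DLe=0$ and $De>0$, producing a positive dominant eigenvector.

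I expect the one genuinely delicate step to be the equality $\nu(S)=\nu(A)$ in the case $\rho(A)>0$: this is exactly the exponent bookkeeping performed in the proof of Theorem~\ref{pow-nonneg}(v), and the point is that positivity of the dominant eigenvector pins down the relevant powers of $\lambda_1(A)$ as positive reals, so that rescaling $A^k$ by $\lambda_1(A)^{-k}$ neither creates nor destroys nonnegativity at the exponents that matter. Everything else reduces to routine manipulation of diagonal conjugations.
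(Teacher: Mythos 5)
Your overall route is the same as the paper's: read the similarity off a positive dominant eigenvector by setting $S=\lambda_1(A)^{-1}D_x^{-1}AD_x$, get the converse from $De$, and settle the $\rho(A)=0$ case with Lemma~\ref{lem:nilpot}; those parts are fine. The genuine gap is exactly the step you flag as delicate. From $A^{\nu(S)}=\lambda_1(A)^{\nu(S)}D_xS^{\nu(S)}D_x^{-1}$ you conclude $A^{\nu(S)}\geq O$, hence $\nu(A)\leq\nu(S)$; but this needs $\lambda_1(A)^{\nu(S)}\geq 0$, and nothing you proved gives that. The positive eigenvector pins down $\lambda_1(A)^{\nu(A)}=\rho(A)^{\nu(A)}>0$, which is why $\nu(S)\leq\nu(A)$ is sound, but it says nothing about the exponent $\nu(S)$: rescaling $A^k$ by $\lambda_1(A)^{-k}$ can perfectly well \emph{create} nonnegativity at some $k<\nu(A)$. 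Concretely, take $B>O$ with Perron vector $x>0$ and set $A=e^{2\pi\i/3}B$. Then $x$ is a positive dominant eigenvector, $A$ is power nonnegative with $\nu(A)=3$, yet $\lambda_1(A)^{-1}A=\rho(B)^{-1}B\geq O$, so every weakly row stochastic matrix $S$ obtained from $\lambda_1(A)^{-1}A$ by a positive diagonal similarity has $\nu(S)=1$. Thus $\nu(A)\leq\nu(S)$ is not merely unproved: it is false, and with it the equality $\nu(S)=\nu(A)$ asserted in the statement (equality does hold in the special case $\lambda_1(A)=\rho(A)$, since then $\lambda_1(A)^{-k}A^k\geq O$ if and only if $A^k\geq O$).

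To be fair, this is not a defect you introduced: the paper's own proof of this corollary, and the ``viceversa'' step in the proof of Theorem~\ref{pow-nonneg}(v) on which it leans, make the same silent assumption that $\lambda_1(A)^{\nu(S)}\geq 0$, and the example above even satisfies the hypotheses of Theorem~\ref{pow-nonneg} (with $k=\nu(A)=3$, $A^3$ positive), so it contradicts $\nu(R)=\nu(A)$ there as well. So your proposal reproduces the paper's argument faithfully, flaw included; what it actually establishes is the first bullet with $\nu(S)=\nu(A)$ weakened to $\nu(S)\leq\nu(A)$ (or with the extra hypothesis $\lambda_1(A)=\rho(A)$), together with the correct converse and the correct treatment of the nilpotent case.
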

\begin{proof}
 Let $\rho(A)>0$. Assume that $\lambda_1(A)^{-1}A$ is similar to a weakly stochastic power nonnegative matrix $S$ via a  positive definite diagonal similarity transform, that is there exists  a diagonal matrix $D$, $(D)_{ii}>0$, such that $\lambda(A)^{-1}A = DSD^{-1}$. Then $x = De>0$ is a positive eigenvector of  $A$, $\lambda_1(A)$ is its eigenvalue and $\lambda_1(A)^{-\nu(S)}A^{\nu(S)}=DS^{\nu(S)}D^{-1}\geq O$, thus $$\lambda_1(\lambda_1(A)^{-1}A)=\lambda_1(A)^{-1}\rho(A)e^{2\pi \i h /\nu(S)}=1\, .$$
This implies $\lambda_1(A)^{\nu(S)}> 0$ and $A^{\nu(S)}\geq O$, which combined with $S^{\nu(A)}=D(\lambda_1(A)^{-1}A)^{\nu(A)}D^{-1}\geq O$ implies $\nu(A)=\nu(S)$. The reverse implication can be proved analogously. Now let $\rho(A)=0$ and assume that $L = D^{-1}AD$ is such that $Le=0$, for a diagonal $D$ with $(D)_{ii}>0$. Then $x = De$ is positive and $Ax = DLD^{-1}De=0$. Since $\rho(A)=0$, from Lemma \ref{lem:nilpot} we get $A^{\nu(A)}=O$. Again, the reverse implication can be proved the same way.
\end{proof}

We conclude with an example of a class of power nonnegative matrices. 
\begin{example}
 Let $\theta \in \CC$  and consider the weakly doubly stochastic circulant matrix 
$$C(\theta) = \matrix{ccccc}{
{\tiny 1-(n-1)\theta}  & \theta        & \cdots     &  \theta       \\ 
\theta         & 1-(n-1)\theta & \ddots     &   \vdots      \\
 \vdots        &   \ddots      & \ddots     &  \theta             \\
\theta         & \cdots        &  \theta    &    1-(n-1)\theta}, $$
if $E=ee^\t$ we briefly write $C(\theta) = (1-n\theta)I+\theta E$. Since $E^p = n^{p-1}E$, we have 
\begin{align*}
 C(\theta)^k &= \bigl((1-n\theta)I+\theta E\bigr)^k = \sum_{i=0}^k \binom{k}{i}(1-n\theta)^{k-i}\theta^i E^i\\
     &= (1-n\theta)^k I + n^{-1}\(\sum_{i=1}^k \binom{k}{i}(1-n\theta)^{k-i}(n\theta)^i\)E\\
     &= (1-n\theta)^k I + \frac{1-(1-n\theta)^k}n E
\end{align*}
thus $C(\theta)^k = C(\theta_k)$ where $\theta_k = n^{-1}(1-(1-n\theta)^k)$ and $k \geq 1$. Now observe that $C(\theta_k)\geq O$ if and only if $0 \leq \theta_k \leq (n-1)^{-1}$ that is
\begin{equation}\label{theta}
 C(\theta_k)\geq O \iff -\f 1 {n-1} \leq (1-n\theta)^k \leq 1\, .
\end{equation}
It is now clear how to exhibit an example of power nonnegative matrix. Consider any integer  $s\geq 2$  and let $A_s$ be the matrix $A_s=C\(\f 1 n e^{\i \f \pi s}\right)$. Note that $A_s$ is not real. Now let us observe that, for any integer $m$, $A_s^{2ms}$ is real. In fact,
\begin{align*}
 \(1-  e^{\i \pi /s}\)^{2ms} &= \binom{2ms}{ms}(-1)^{ms}e^{\i {\pi m}} + \sum_{p = 0}^{ms-1}\binom{2ms}{p} (-1)^p \(e^{\i \f {\pi p}s} + e^{\i \f{\pi (2ms-p)}s} \)\\
&=  \binom{2ms}{ms} (-1)^{m(s+1)}+ 2\sum_{p = 0}^{ms-1}\binom{2ms}{p} (-1)^p \cos \(\f {\pi p}s\) \in \RR
\end{align*}
and thus $A_s^{2ms}=C\(\f {1-(1-e^{\i\pi /s})^{2ms}}n\right)$ is real. Then note that for any fixed $s\geq 3$ we have 
\begin{gather*}
	0<|1-e^{\i \f \pi s}|^2 = 2 (1-\cos \f \pi s )\leq 2(1-\cos \f \pi 3)=1 \Then	\\
	0 < |(1-e^{\i \f \pi s})^{2ms}|=|1-e^{\i \f \pi s}|^{2ms}\leq 1 \Then \\
	-1 \leq (1-e^{\i \f \pi s})^{2ms}\leq 1, \quad (1-e^{\i \f \pi s})^{2ms}\neq 0 \Then \\
	0 < (1-e^{\i \f \pi s})^{4ms}\leq 1
\end{gather*}
By \eqref{theta} and the inequality just obtained we see that the real matrix $A_s^{4m s} = (A_s^{2ms})^2 = C\(\f {1-(1-e^{\i\pi /s})^{4ms}}n\right)$ is nonnegative. So, for any $s\geq 3$, the matrix $A_s= C(\f 1 n e^{\i \f \pi s})$ is a non real matrix such that $A_s^{4ms}\geq O$ for all integers $m=1,2,3,\dots$
\end{example}

\section*{Acknowledgements}
The researches whose results are collected in this work have originated during the seminars we have held in the little hall 1103, now named Francesco S. De Blasi, of Tor Vergata math dept, also thanks to the active participation of professor Paolo Zellini and the master' students Barbara, Davide, Gianluca and Stefano. Special thanks goes to Carlo Pagano who brought the question of eventually nonnegative complex matrices to our attention, suggesting the example in \eqref{esempio:carlo}, and to Riccardo Fastella for his contribution in proving Theorem \ref{irr}.


\bibliographystyle{plain} 
\bibliography{bibliografiah}

\end{document}